\DeclareSymbolFont{cyrletters}{OT2}{wncyr}{m}{n}
\DeclareMathSymbol{\Sha}{\mathalpha}{cyrletters}{"58}
\newcommand{\comment}[1]{}
\newcommand{\defi}[1]{\textsf{#1}} 
\newcommand{\C}{\mathfrak{C}}
\newcommand{\category}{\mathbf{C}}
\newcommand{\D}{\mathfrak{D}}
\newcommand{\E}{\mathbf{E}}
\newcommand{\Ew}{\E_\omega}
\newcommand{\F}{\mathbb{F}}
\newcommand{\PP}{\mathbb{P}}
\newcommand{\Q}{\mathbb{Q}}
\newcommand{\R}{\mathbb{R}}
\newcommand{\Z}{\mathbb{Z}}
\newcommand{\Qbar}{{\overline{\Q}}}
\newcommand{\calA}{\mathcal{A}}
\newcommand{\calB}{\mathcal{B}}
\newcommand{\calL}{\mathcal{L}}
\newcommand{\FF}{\mathscr{F}}
\newcommand{\FFw}{\FF_\omega}
\newcommand{\GG}{\mathscr{G}}
\newcommand{\GGw}{\GG_\omega}
\newcommand{\II}{\mathscr{I}}
\DeclareMathOperator{\Char}{char}
\DeclareMathOperator{\dom}{dom}
\DeclareMathOperator{\Hom}{Hom}
\DeclareMathOperator{\Isom}{Isom}
\DeclareMathOperator{\Spec}{Spec}
\newcommand{\Fields}{\operatorname{\textup{\textbf{Fields}}}}
\newcommand{\Fieldsw}{\Fields_\omega}
\newcommand{\Graphs}{\operatorname{\textup{\textbf{Graphs}}}}
\newcommand{\Graphsw}{\Graphs_\omega}
\newcommand{\xvec}{\vec{x}}
\newcommand{\la}{\langle}
\newcommand{\ra}{\rangle}
\newcommand{\set}[2]{\ensuremath{ \{ #1 : #2 \} }}
\DeclareMathOperator{\DgSp}{DgSp}
\newcommand{\DS}[2]{\DgSp_{#1}(#2)}
\newcommand{\directsum}{\oplus} 
\newcommand{\injects}{\hookrightarrow}
\newcommand{\isom}{\simeq}
\newcommand{\To}{\longrightarrow}
\newcommand{\union}{\cup} 
\newcommand{\Union}{\bigcup} 
\newcommand{\isomto}{\overset{\sim}{\rightarrow}}
\newcommand{\bfd}{\boldsymbol{d}}
\newcommand{\bfz}{\boldsymbol{0}}
\newtheorem{theorem}{Theorem}[section]
\newtheorem{lemma}[theorem]{Lemma}
\newtheorem{corollary}[theorem]{Corollary}
\newtheorem{proposition}[theorem]{Proposition}
\theoremstyle{definition}
\newtheorem{defn}[theorem]{Definition}
\newtheorem{definition}[theorem]{Definition}
\theoremstyle{remark}
\newtheorem{remark}[theorem]{Remark}
\newcommand{\be}{\begin{enumerate}}
\newcommand{\ee}{\end{enumerate}}
\begin{document}

\title[Graphs and fields]{A computable functor from graphs to fields}
\author{Russell Miller}\thanks{This project was initiated at a workshop held at
the American Institute of Mathematics.\\
\indent
The first author was partially supported
by National Science Foundation 
grants DMS-1001306 and DMS-1362206, 
and by PSC-CUNY Research Awards 67839-00 45 and
66582-00 44.} 
\address{Department of Mathematics, Queens College, Queens, NY 11367 \& Ph.D.\ Programs in Mathematics and Computer Science, CUNY Graduate Center, New York, NY 10016, USA}
\email{Russell.Miller@qc.cuny.edu}
\urladdr{http://qcpages.qc.cuny.edu/~rmiller}
\author{Bjorn Poonen}
\thanks{The second author was partially supported by National Science Foundation grant DMS-1069236 and a grant from the Simons Foundation (340694 to Bjorn Poonen).}
\address{Department of Mathematics, Massachusetts Institute of Technology, Cambridge, MA 02139-4307, USA}
\email{poonen@math.mit.edu}
\urladdr{http://math.mit.edu/~poonen}
\author{Hans Schoutens}
\thanks{}
\address{Department of Mathematics, New York City College of Technology, 300 Jay Street, Brooklyn, NY 11201 \& Ph.D.\ Program in Mathematics, CUNY Graduate Center, New York, NY 10016, USA}
\email{hschoutens@citytech.cuny.edu}
\urladdr{}
\author{Alexandra Shlapentokh}
\thanks{The fourth author was partially supported by National Science Foundation grant DMS-1161456. \\
\indent  Any opinions, findings, and conclusions or recommendations expressed in this material are those of the authors and do not necessarily reflect the views of the National Science Foundation or Simons Foundation.}
\address{Department of Mathematics, East Carolina University, Greenville, NC 27858, USA}
\email{shlapentokha@ecu.edu}
\urladdr{http://myweb.ecu.edu/shlapentokha/}
\date{October 25, 2015}

\begin{abstract}
We construct a fully faithful functor from the category of graphs 
to the category of fields.
Using this functor,
we resolve a longstanding open problem in computable model theory, 
by showing that for every nontrivial countable structure $\mathcal{S}$,
there exists a countable field $\mathcal{F}$ with the same essential
computable-model-theoretic properties as $\mathcal{S}$.
Along the way, we develop a new ``computable category theory'',
and prove that our functor and its partially-defined inverse
(restricted to the categories of countable graphs and countable fields) 
are computable functors.
\end{abstract}

\maketitle

\section{Introduction}\label{S:introduction}

\subsection{A functor from graphs to fields}

Let $\Graphs$ be the category of symmetric irreflexive graphs
in which morphisms are isomorphisms onto induced subgraphs
(see Section~\ref{S:notation graphs}).
Let $\Fields$ be the category of fields,
with field homomorphisms as the morphisms.
Using arithmetic geometry, we will prove the following:

\begin{theorem}
\label{T:fully faithful}
There exists a fully faithful functor $\FF \colon \Graphs \to \Fields$.
\end{theorem}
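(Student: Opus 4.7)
The plan is to construct $\FF(G)$ as a compositum of local vertex-fields, glued together by short extensions that encode the edges. Fix a prime field $k_0$. For each vertex $v \in V(G)$ take an independent transcendental $t_v$ over $k_0$, and attach a smooth projective curve $C_v$ over $k_0(t_v)$ whose moduli parameter (e.g.\ the $j$-invariant, if $C_v$ is elliptic) equals $t_v$; set $K_v \colonequals k_0(t_v)(C_v)$. The essential feature we want is rigidity: for generic $t_v$, $\Aut(C_v)$ over its base is trivial, so no nontrivial self-map of $K_v$ fixes $k_0$. For each edge $\{u,v\} \in E(G)$, adjoin to the compositum a short (say quadratic or cyclic) extension that witnesses the edge and that is chosen so as to be present precisely when $\{u,v\}$ is an edge. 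Define $\FF(G)$ to be the resulting field inside a fixed algebraic closure of $k_0(\{t_v\}_{v \in V(G)})$.

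Functoriality and faithfulness should fall out cleanly: an induced-subgraph embedding $G \injects G'$ sends vertex data to vertex data and edge data to edge data, hence lifts to an inclusion $\FF(G) \injects \FF(G')$; and distinct graph morphisms send the $t_v$ to distinct transcendentals, so the induced field maps are distinct.

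The main obstacle, and the real substance of the theorem, is fullness: given an arbitrary field homomorphism $\varphi\colon \FF(G)\to\FF(G')$, I must recover a unique induced-subgraph morphism inducing $\varphi$. The strategy is to characterize the vertex-pieces $K_v$ \emph{intrinsically} inside $\FF(G)$ — for example, as maximal subfields of a prescribed transcendence degree carrying a prescribed moduli/endomorphism type — so that $\varphi$ is forced to send each $K_v$ onto some $K_{v'}$; the edge-witnessing extensions must then be similarly canonical so that adjacency is preserved. The inputs from arithmetic geometry that I expect to do the heavy lifting are (i) rigidity of curves with generic moduli (trivial automorphism group), (ii) the contravariant equivalence between dominant morphisms of smooth projective curves and field inclusions of their function fields, and (iii) Galois/ramification invariants that distinguish edge-extensions from accidental sub-extensions. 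Engineering the curves $C_v$ and edge-witnessing extensions precisely enough that no spurious field embedding exists between distinct $\FF(G)$ and $\FF(G')$ beyond those coming from graph morphisms is the delicate point on which the whole theorem rests.
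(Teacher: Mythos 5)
Your proposal has the right general flavor (one transcendental and one rigid curve per vertex, extra field data per edge), but as written it has two concrete defects that would defeat fullness, which you correctly identify as the crux. First, you adjoin witnessing data only for edges, not for non-edges. In $\Graphs$ a morphism is an isomorphism onto an \emph{induced} subgraph, so it must reflect non-edges as well as preserve edges; if a non-edge $\{u,v\}$ of $G$ contributes nothing to $\FF(G)$, then nothing obstructs a field homomorphism $\FF(G)\to\FF(G')$ whose induced vertex map sends that non-edge to an edge of $G'$, and such a homomorphism is not in the image of $\Hom(G,G')$. The paper handles this by attaching a curve to \emph{every} pair $\{i,j\}$ --- $Y_{u_iu_j}$ for an edge, $Y_{u_i+u_j}$ for a non-edge --- and arranging (via a real embedding and the non-isotriviality of the family $Y_t$) that exactly one of the two ever acquires a rational point. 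Second, quadratic or cyclic edge-extensions are Galois, so the compositum carries nontrivial automorphisms fixing all the vertex fields (e.g.\ $\sqrt{f}\mapsto-\sqrt{f}$); already $\Aut(\FF(G))$ would strictly contain the image of $\Aut(G)$, so $\FF$ cannot be full. This is why the paper uses function fields of genus-$3$ plane curves with \emph{no} nontrivial birational automorphisms even after base extension as the edge/non-edge witnesses.

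Beyond these, your plan for proving fullness --- intrinsically characterizing the vertex subfields $K_v$ inside $\FF(G)$ and distinguishing edge-extensions by Galois/ramification invariants --- is left entirely open, and the paper takes a different and cleaner route that you may want to note: rather than classifying subfields, it computes sets of rational points, which are automatically functorial in the field. The key lemmas are that $X(\FF(G))$ is exactly the set of vertex points $\{(u_i,v_i)\}$ and that $Y_{u_iu_j}(\FF(G))$ is nonempty iff $\{i,j\}$ is an edge; these follow from the de Franchis--Severi theorem (rational maps from a product of varieties to a curve of genus $>1$ factor through a projection), the absence of nonconstant maps between any two of the chosen curves, $X(\Q)=\emptyset$, and a real-place obstruction killing unwanted points on the $Y_t$. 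Fullness and faithfulness are then formal consequences of constructing an explicit inverse functor $\GG$ with $\GG\FF\isom 1_{\Graphs}$ and $\GG$ faithful. Your outline would need to be rebuilt along these lines (non-Galois rigid witnesses for all pairs, plus a point-counting mechanism) before it could be completed.
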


\noindent
(The definitions of ``full'' and ``faithful'' are reviewed in
Section~\ref{S:category theory}.)
In particular, given a graph $G$, the functor produces a field
with the same automorphism group as $G$.

\subsection{Computable functors}
\label{S:computable functors}

For applications to computable model theory,
we are interested in graphs and fields whose 
underlying set is $\omega \colonequals \{0,1,2,\ldots\}$.
These form full subcategories $\Graphsw$ and $\Fieldsw$.
The functor $\FF$ of Theorem~\ref{T:fully faithful} will be constructed 
so that it restricts to a functor $\FFw \colon \Graphsw \to \Fieldsw$.
Let $\Ew$ denote the essential image of $\FFw$ 
(see Section~\ref{S:category theory} for definitions).
Then we may view $\FFw$ as a functor from $\Graphsw$ to $\Ew$.
We will also a define a functor $\GGw \colon \Ew \to \Graphsw$,
and we would like to say that $\FFw$ and $\GGw$ are computable
and are inverse to each other in some computable way.

To guide us to the correct formulation of such statements,
we create a new ``type-2 computable category theory'';
see Section~\ref{sec:categories}.
The adjective ``type-2'', borrowed from computable analysis 
(see Remark~\ref{R:type-2}),
indicates that we work with noncomputable objects;
indeed, $\Graphsw$ and $\Fieldsw$ each contain uncountably many 
noncomputable objects.
The effectiveness in our definitions 
really arises in the concept of a \emph{computable functor}, 
a functor in which the processes of transforming objects to objects
and morphisms to morphisms are given by Turing functionals:
roughly speaking, 
each output should be computable given an oracle for the input
(see Definition~\ref{defn:computablefunctor}).
Our computable category theory includes also a notion of
\emph{computable isomorphism of functors}
(see Definition~\ref{D:computable morphism}).
Using this lexicon, we can now state our main result 
on the computability of the functors:

\begin{theorem}
\label{T:computable functors}
\hfill
\begin{enumerate}[\upshape (a)]
\item \label{I:Fw is computable}
The functors $\FFw$ and $\GGw$ are computable 
in the sense of Definition~\ref{defn:computablefunctor}.
\item \label{I:Fw and Gw are inverse}
The composition $\GGw \FFw$ is computably isomorphic to $1_{\Graphsw}$, 
and $\FFw \GGw$ is computably isomorphic to $1_{\Ew}$.
\end{enumerate}
\end{theorem}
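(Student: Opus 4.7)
The plan is to exhibit Turing functionals witnessing each assertion, by inspecting the construction underlying Theorem~\ref{T:fully faithful}. For computability of $\FFw$ on objects, one observes that $\FF(G)$ is built by adjoining to a fixed base field a family of generators indexed uniformly by the vertices of $G$, together with algebraic relations indexed uniformly by the edges. Given an oracle for the edge relation of $G \in \Graphsw$, one may computably enumerate this presentation, and then by standard effective polynomial-ring and field-of-fractions bookkeeping compute the field operations on a coding of $\FF(G)$ into $\omega$. Functoriality reduces the morphism case to the same bookkeeping: an embedding $h \colon G_1 \hookrightarrow G_2$ pulls back to a map of presentations, so the induced field homomorphism is computable from oracles for $h$, $G_1$, and $G_2$.

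For $\GGw$ the plan is to recover the graph from the field by identifying the vertices (and edges) as distinguished elements of $K \in \Ew$. The construction of $\FF$ must therefore be arranged so that ``being a vertex-generator'' is a property that is \emph{decidable} relative to a field oracle---for instance, one whose positive and negative instances can each be certified by finite existential data searchable in the field operations. Given such a property, $\GGw$ on objects runs through the elements of $K$, tests the property to extract the vertex set (identified with $\omega$ via its enumeration), and then tests each pair for the defining relation of an edge. On morphisms, any field homomorphism in $\Ew$ must send vertex-generators to vertex-generators, so it restricts to a graph embedding, computably from an oracle for the field map.

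For part~\eqref{I:Fw and Gw are inverse}, the natural isomorphism $\eta_G \colon G \isomto \GGw\FFw(G)$ is given by the tautological identification of each vertex $v$ with the generator it produced in $\FF(G)$; since this bookkeeping is built into the presentation, $\eta_G$ is a computable morphism in the sense of Definition~\ref{D:computable morphism} uniformly in the oracle for $G$. Dually, the isomorphism $\epsilon_K \colon \FFw\GGw(K) \isomto K$ is obtained by sending the freshly rebuilt generators back to the distinguished elements of $K$, uniformly computably from an oracle for $K$, and full faithfulness (Theorem~\ref{T:fully faithful}) guarantees this is an isomorphism of fields. The principal obstacle is the rigidity demanded of the vertex/edge recovery in $\GGw$: a field oracle supplies only the addition and multiplication tables of $K$, so the defining conditions for vertices and edges must be \emph{positively verifiable}, and their negations likewise, from these tables alone. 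Ensuring that the arithmetic-geometric construction of $\FF$ provides such a computable recovery---rather than merely a first-order definable one---is the technical heart of the argument, and dictates the design of the base field, the transcendental generators, and the edge relations used in the proof of Theorem~\ref{T:fully faithful}.
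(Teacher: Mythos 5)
Your proposal follows essentially the same route as the paper: $\FFw$ is computed from a uniform presentation built column-by-column from the edge-relation oracle; $\GGw$ recovers the vertex set as the solution set $X(F)$ (decidable relative to the field oracle and enumerated in a canonical order, Lemma~\ref{L:enumeration of X(F)}) and decides edges by a search; and the natural isomorphisms are the tautological identifications of generators. The requirement you correctly isolate as the technical heart --- that both ``edge'' and ``non-edge'' be certifiable by finite data in the field --- is discharged in the paper by Lemma~\ref{L:points over F(G)}: for each pair $\{i,j\}$ exactly one of $Y_{u_iu_j}$ and $Y_{u_i+u_j}$ acquires an $F$-point, so a parallel search over the two curves halts and its outcome decides the edge. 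The one divergence worth noting is in computing $\FFw$ on morphisms: rather than mapping presentations directly as you suggest (which also works, given the uniformity of the construction), the paper observes that $\GGw(\FFw(g))=g$ and invokes an effective faithfulness statement for $\GGw$ (Proposition~\ref{P:recover f from G(f)}), which reconstructs a field morphism from its image under $\GGw$ by searching for the images of the generators; the same proposition then also yields the computable isomorphism $\FFw\GGw\cong 1_{\Ew}$, so it does double duty. Your sketch would be complete once these searches are justified by the cited lemmas.
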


We may summarize Theorem~\ref{T:computable functors}
by saying that $\FFw$ and $\GGw$ give a computable equivalence
of categories between $\Graphsw$ and $\Ew$
(see Definition~\ref{D:equivalence}).
In fact, we will define $\GGw$ so that $\GGw \FFw$ \emph{equals} $1_{\Graphsw}$, 
but computable isomorphism in place of equality here 
suffices for the applications in Section~\ref{sec:consequences}.

Here is one concrete consequence of Theorem~\ref{T:computable functors}:

\begin{corollary}
\label{C:G and f are computable}
If a field $F \in \Fieldsw$ is isomorphic to a field 
in the image of $\FFw$, 
then one can compute from $F$ a graph $G \in \Graphsw$
and an isomorphism $F \to \FFw(G)$.
\end{corollary}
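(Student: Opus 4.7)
The plan is to deduce the corollary directly from Theorem~\ref{T:computable functors} by unwinding the definitions of computable functor and computable isomorphism of functors, rather than doing any fresh work. The hypothesis that $F$ is isomorphic to some field in the image of $\FFw$ is, by the definition of essential image, exactly the statement that $F \in \Ew$, so $\GGw$ is defined at $F$.

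First, I would apply $\GGw$ to $F$. By Theorem~\ref{T:computable functors}(\ref{I:Fw is computable}), $\GGw$ is a computable functor, so by Definition~\ref{defn:computablefunctor} there is a Turing functional that, given $F$ as oracle, returns $G \colonequals \GGw(F) \in \Graphsw$. This produces the graph required by the corollary.

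Next I would produce the isomorphism $F \to \FFw(G)$. This is precisely what is supplied by Theorem~\ref{T:computable functors}(\ref{I:Fw and Gw are inverse}): the assertion that $\FFw \GGw$ is computably isomorphic to $1_\Ew$ means, by Definition~\ref{D:computable morphism}, that there is a natural isomorphism with components $\eta_F \colon F \isomto \FFw(\GGw(F)) = \FFw(G)$ that are computed uniformly by a Turing functional with $F$ as oracle (and if the paper's convention orients $\eta$ the other way, the component is still invertible computably from $F$, since $\FFw$ and $\GGw$ are computable on morphisms as well). Applying this functional to our $F$ yields the required isomorphism, and composing the two procedures gives a single algorithm with oracle $F$ that outputs both $G$ and the isomorphism $F \to \FFw(G)$.

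There is no real mathematical obstacle; the full content of the corollary is absorbed into the definitions set up in Section~\ref{sec:categories}. The only point of substance is a bookkeeping one: one must verify that Definition~\ref{defn:computablefunctor} and Definition~\ref{D:computable morphism} have been formulated so that the phrase ``computable from $F$'' in the statement of the corollary matches ``given by a Turing functional with oracle $F$,'' uniformly in $F \in \Ew$. Once that is confirmed, the corollary is an immediate combination of parts~(\ref{I:Fw is computable}) and~(\ref{I:Fw and Gw are inverse}) of Theorem~\ref{T:computable functors}.
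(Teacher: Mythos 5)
Your argument is correct and is essentially the paper's own proof: the paper likewise sets $G\colonequals\GGw(F)$ and invokes Theorem~\ref{T:computable functors}\eqref{I:Fw and Gw are inverse} to obtain the computable isomorphism between $F$ and $\FFw(G)$ (the paper's one-line proof cites the $\GGw\FFw\isom 1_{\Graphsw}$ half, but the content is the same, and your remark that a computable isomorphism of functors has a computable two-sided inverse disposes of the orientation issue). No gaps.
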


\begin{proof}
Apply the isomorphism $\GGw \FFw \isom 1_{\Graphsw}$ 
of Theorem~\ref{T:computable functors}\eqref{I:Fw and Gw are inverse}
to $G \colonequals \GGw(F)$.
\end{proof}

Specializing Corollary~\ref{C:G and f are computable} 
to the case in which $F$ is computable yields the following:

\begin{corollary}
\label{cor:extrafields}
Every computable field isomorphic to a field in the image of $\FFw$
is \emph{computably} isomorphic to $\FFw(G)$ 
for some \emph{computable} $G \in \Graphsw$.
\end{corollary}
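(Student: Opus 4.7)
The plan is to specialize Corollary~\ref{C:G and f are computable} to the case that $F$ is a computable field. That corollary already provides a Turing functional which, given an oracle for any $F \in \Fieldsw$ isomorphic to a field in the image of $\FFw$, outputs a graph $G \in \Graphsw$ together with an isomorphism $\varphi \colon F \to \FFw(G)$. When the input field $F$ happens to be computable, the oracle that one supplies to this functional is decidable, and so the outputs are themselves decidable: the atomic diagram of $G$ is computable, and the graph of $\varphi$ is a computable subset of $\omega \times \omega$.

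Next I would verify that $\FFw(G)$ is itself a computable field, so that $\varphi$ qualifies as a computable isomorphism between two computable structures and not merely as a computable bijection from a computable field to an a priori noncomputable one. This is precisely the content of Theorem~\ref{T:computable functors}\eqref{I:Fw is computable}: because $\FFw$ is a computable functor, its action on objects is realized by a Turing functional, and this functional, when fed the computable graph $G$ produced in the previous step, yields a computable field $\FFw(G)$. Concatenating the two observations shows that $F$ and $\FFw(G)$ are indeed computably isomorphic via $\varphi$.

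The only step that warrants care is the formal passage from ``the input is computable and the relevant map is a Turing functional'' to ``the output is computable''. This hinges on the precise form of Definition~\ref{defn:computablefunctor} and on the standard convention that a Turing functional applied to a decidable oracle produces decidable outputs. Since the entire point of setting up the ``type-2 computable category theory'' of Section~\ref{sec:categories} is to make exactly this kind of inference automatic, I do not expect any real obstacle here; the corollary should be little more than a statement that the already-proven effectiveness of $\FFw$ and $\GGw$ descends to computable objects.
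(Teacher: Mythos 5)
Your proposal is correct and follows the paper's own route: the paper derives this corollary precisely by specializing Corollary~\ref{C:G and f are computable} to a computable $F$, so that the Turing functionals underlying $\GGw$ and the natural isomorphism $\GGw\FFw \isom 1_{\Graphsw}$, fed a decidable oracle, output a computable graph $G$ and a computable isomorphism $F \to \FFw(G)$. Your added remark that $\FFw(G)$ is itself computable (via Theorem~\ref{T:computable functors}\eqref{I:Fw is computable}) is a harmless and correct elaboration of the same argument.
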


Following \cite{K86}*{\S 4}, we call a structure \defi{automorphically trivial}
if there exists a finite subset $S_0$ of its domain $S$
such that every permutation of $S$ fixing $S_0$ pointwise
is an automorphism.

\begin{proposition}
\label{prop:extrafields}
Let $F \in \Ew$;
that is, $F$ is isomorphic to a field in the image of~$\FFw$.
Let $\II \colonequals \{G \in \Graphs_w : \FFw(G) \isom F\}$,
so $\II \ne \emptyset$.
\begin{enumerate}[\upshape (a)]
\item \label{I:I is isomorphism class}
The set $\II$ is an isomorphism class in $\Graphs_w$.
\item \label{I:dichotomy}
Either every graph in $\II$ is automorphically trivial and computable,
or there exists a graph in $\II$ of the same Turing degree as $F$.
\end{enumerate}
\end{proposition}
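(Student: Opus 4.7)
The plan is to treat the two parts separately: full faithfulness of $\FFw$ handles~(a), while a case split governed by Corollary~\ref{C:G and f are computable} and Knight's theorem handles~(b). For~(a), if $\FFw(G_1) \isom F \isom \FFw(G_2)$, then the composite isomorphism $\FFw(G_1) \to \FFw(G_2)$ lifts, by fullness, to a morphism $G_1 \to G_2$ in $\Graphsw$; faithfulness applied to this morphism and to the lift of its inverse shows the lift is a graph isomorphism. Conversely, any $G'$ isomorphic to some $G \in \II$ satisfies $\FFw(G') \isom \FFw(G) \isom F$, so $G' \in \II$; hence $\II$ is exactly an isomorphism class in $\Graphsw$.

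For~(b), apply Corollary~\ref{C:G and f are computable} to obtain from $F$ a graph $G \in \II$ with $\deg(G) \le \deg(F)$, and split on whether $G$ is automorphically trivial. In the automorphically trivial case I would argue that $G$ is in fact \emph{computable}, not merely computable from $F$. Let $S_0 \subseteq \omega$ be a finite witness to triviality. For any two distinct $b, b' \in \omega \setminus S_0$ the transposition $(b\ b')$ fixes $S_0$ pointwise and is therefore an automorphism of $G$; this forces, for each fixed $s \in S_0$, the value of the adjacency relation between $s$ and an arbitrary vertex of $\omega \setminus S_0$ to be independent of that vertex, and a similar transposition argument forces adjacency between two distinct elements of $\omega \setminus S_0$ to be independent of the pair. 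Hence $G$ is determined by $S_0$, the restriction of the edge relation to $S_0$, one bit per element of $S_0$, and a single further bit—finitely much data, so $G$ is computable. Automorphic triviality being an isomorphism invariant, every graph in $\II$ admits the same finite description and is likewise computable.

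In the non-automorphically-trivial case I would invoke Knight's theorem \cite{K86}: the Turing degrees of isomorphic copies of a non-automorphically-trivial countable structure form an upward-closed set. Since $\deg(G) \le \deg(F)$, this yields a copy of $G$ in $\Graphsw$ of Turing degree exactly $\deg(F)$; by~(a) that copy lies in $\II$. The main subtle point is the verification, in the automorphically trivial case, that $G$ is genuinely computable rather than merely Turing-reducible to $F$; this is what strengthens the conclusion from ``some graph in $\II$ is computable from $F$'' to ``every graph in $\II$ is computable''. The rest is straightforward bookkeeping together with an appeal to Knight's upward-closure theorem.
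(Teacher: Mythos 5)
Your proposal is correct and follows essentially the same route as the paper: part~(a) from full faithfulness of $\FFw$, and part~(b) by obtaining $G \le_T F$ from Corollary~\ref{C:G and f are computable} and then splitting on automorphic triviality, using the finite-data argument in the trivial case and Knight's upward-closure theorem otherwise. You merely spell out in more detail two steps the paper leaves terse (the formal deduction of~(a) from full faithfulness, and the transposition argument showing automorphically trivial graphs are computable), and both elaborations are sound.
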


Proofs of Theorem~\ref{T:computable functors}
and Proposition~\ref{prop:extrafields} appear in Section~\ref{S:computability}.

\begin{remark}
As will be explained in Section~\ref{S:related work},
there have been other attempts to give effective versions of
category theory (and, more succesfully, to give a categorical underpinning
to computability theory).
To our knowledge, however, ours is the first attempt
to define effectiveness for functors using type-2 Turing computation.
\end{remark}

\subsection{Computable model theory}
\label{S:computable model theory}

One of the goals of computable model theory is to help
distinguish various classes of countable structures
according to the algorithmic complexity of those structures.
The class of algebraically closed fields of characteristic $0$, for example,
is viewed throughout model theory as a particularly simple class,
and its computability-theoretic properties confirm this view:
every countable algebraically closed field can be computably presented,
all of them are relatively $\Delta^0_2$-categorical, and the only one
that is not relatively computably categorical has infinite computable dimension.
(All these terms are defined in Section~\ref{sec:consequences}.)
In contrast, the theory of linear orders is a good deal more complex:
there do exist countable linear orders with no computable presentation,
and linear orders with much higher degrees of categoricity than $\bfz'$.
In this view, the theory of graphs is even more complicated:  for example,
every computable linear order has computable dimension either $1$
or $\omega$, whereas for computable (symmetric irreflexive) graphs
all computable dimensions $\leq\omega$ are known to occur.

We will discuss specific properties 
such as computable presentability and computable dimension
when we come to prove results
about them, in Section~\ref{sec:consequences}.  For now,
we simply note that a substantial body of results has been established
on the possibility of transferring these properties from one class
of countable structures to another.  After much piecemeal work
by assorted authors, most of these results were gathered
together and brought to completion in the work \cite{HKSS},
by Hirschfeldt, Khoussainov, Shore, and Slinko.  There it was proven that
the class of symmetric irreflexive graphs is \emph{complete},
in the very strong sense of their Definition~1.21.
The authors gave a coding
procedure that, given any countable structure $S$ with domain $\omega$
(in an arbitrary computable language, with one quite trivial restriction) 
as its input,
produced a countable graph $G$ on the same domain with the same
computable-model-theoretic properties as $S$.
Several other natural properties have been introduced since then
(the automorphism spectrum, in \cite{HMM}*{Definition~1.1},
and the categoricity spectrum, in \cite{FKM10}*{Definition~1.2},
for example), and each of these has also turned out to be preserved
under the construction from \cite{HKSS}.
The method they gave was quite robust, in this sense, and one
may expect that it will also be found to preserve other properties
that are yet to be defined.

Having established the completeness of the class of countable graphs in this sense,
the authors went on to consider many other everyday classes of countable
first-order structures.  By doing a similar coding from graphs into other classes,
they succeeded in proving the completeness (in this same sense) of the following classes:
\begin{itemize}
\item
countable directed graphs;
\item
countable partial orderings;
\item
countable lattices;
\item
countable rings (with zero-divisors);
\item
countable integral domains of arbitrary characteristic;
\item
countable commutative semigroups;
\item
countable $2$-step nilpotent groups.
\end{itemize}
(We note again that in some cases, these results had been established
in earlier work by other authors.  In certain of these cases, the language
must be augmented by a finite number of constant symbols.)
On the other hand, various existing and subsequent results demonstrated
that none of the following classes is complete in this sense:
\begin{itemize}
\item
countable linear orders 
(e.g., by results in \cite{GD80}*{Theorem~2}, 
\cite{Rem81}*{Corollary~1}, and \cite{Ri81}*{Theorem~3.3});
\item
countable Boolean algebras (see \cite{DJ94}*{Theorem~1}, 
or use \cite{Ri81}*{Theorem~3.1});
\item
countable trees, either as partial orders or under the meet relation $\wedge$
(by \cite{Ri81}*{Theorem~3.4} or \cite{LMMS}*{Theorem~1.8});
\item
algebraic field extensions of $\Q$ or of $\F_p$
(see \cite{M09}*{Corollary~5.5}, for example);
\item
field extensions of $\Q$ of finite transcendence degree
(see \cite{M09}*{\S 6});
\item
countable archimedean ordered fields 
(by Theorem~3.3.1 in Levin's thesis \cite{Levin}).
\end{itemize}
In addition, 
Goncharov announces in \cite{G81} that every computable abelian group
has computable dimension $1$ or $\omega$, though it seems that 
a detailed proof does not exist in print.  This result would add
the class of countable abelian groups to the list of non-complete classes.

Conspicuously absent from both of the lists above is the class of countable fields.  
The question whether this class possesses the completeness property 
has remained open, 
despite substantial interest since the publication of \cite{HKSS}: 
see the introduction to \cite{LMMS}.
In this paper we resolve the question, using the computable functors
of Section~\ref{S:computable functors}:

\begin{theorem}
\label{T:fields are complete}
The class of countable fields has the completeness property 
of \cite{HKSS}*{Definition~1.21}.
\end{theorem}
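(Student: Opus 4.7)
The plan is to compose the coding from countable structures to graphs established in \cite{HKSS} with the computable functor $\FFw \colon \Graphsw \to \Fieldsw$ of Theorem~\ref{T:computable functors}. Given a nontrivial countable structure $S$ on domain $\omega$, the HKSS completeness result furnishes a graph $G_S \in \Graphsw$ with the same computable-model-theoretic properties as $S$. The field we will associate to $S$ is $\FFw(G_S) \in \Fieldsw$, and we will argue that the pair of computable functors $(\FFw, \GGw)$ transfers all relevant properties between $G_S$ and $\FFw(G_S)$.

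Verifying the conditions in \cite{HKSS}*{Definition~1.21} for the composite coding $S \mapsto \FFw(G_S)$ then reduces to the following points. The coding itself is computable by Theorem~\ref{T:computable functors}\eqref{I:Fw is computable}. An inverse coding is provided by $\GGw$, which is also computable, and the computable natural isomorphism $\GGw \FFw \isom 1_{\Graphsw}$ of Theorem~\ref{T:computable functors}\eqref{I:Fw and Gw are inverse} allows us to recover a presentation of $G_S$ uniformly from a presentation of $\FFw(G_S)$. The fiber of $\GGw$ over the isomorphism class of $\FFw(G_S)$ is a single isomorphism class by Proposition~\ref{prop:extrafields}\eqref{I:I is isomorphism class}. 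Finally, because $\FFw$ is fully faithful (Theorem~\ref{T:fully faithful}), it induces a group isomorphism $\Aut(G_S) \isomto \Aut(\FFw(G_S))$, so automorphism-group invariants are preserved as well.

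The one subtle point in the HKSS framework concerns \emph{automorphically trivial} structures, where the correspondence between Turing degrees of presentations and isomorphism types can fail in the usual way. Proposition~\ref{prop:extrafields}\eqref{I:dichotomy} is tailored precisely to this: the dichotomy it provides---either every graph in the preimage is automorphically trivial and computable, or some graph in that preimage shares the Turing degree of $F$---matches the two cases that \cite{HKSS}*{Definition~1.21} admits. The main work is not conceptual but organizational: one must check each computability-theoretic invariant listed in the HKSS definition and verify that it transfers under the pair $(\FFw, \GGw)$. This detailed verification is carried out in Section~\ref{sec:consequences}, and the hardest step will likely be handling the automorphically trivial case in such a way that the invariants on the field side are governed by the same dichotomy as on the graph side.
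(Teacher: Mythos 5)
Your proposal follows the same route as the paper: reduce to graphs via \cite{HKSS}*{Theorem~3.1}, then transfer each invariant through the computable equivalence $(\FFw,\GGw)$, using full faithfulness, the computable isomorphism $\GGw\FFw \isom 1_{\Graphsw}$, and the dichotomy of Proposition~\ref{prop:extrafields} (together with Knight's theorem) for the automorphically trivial case. The paper's actual proof consists precisely of the invariant-by-invariant verification you defer to Section~\ref{sec:consequences} (spectra, $\bfd$-computable dimension and categoricity spectra, degree spectra of relations, automorphism spectra), so your outline is correct in approach, though the substance of the argument lies in that checking.
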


Proving the many specific aspects of this theorem 
will take most of Section~\ref{sec:consequences}.
Our proof of Theorem~\ref{T:fields are complete} shows more specifically 
that the class of countable fields of characteristic~$0$ 
has the completeness property.


\begin{remark}
The constructions in \cite{HKSS} too
can be expressed in terms of our computable category theory.
\end{remark}

\subsection{Structure of the paper}

Section~\ref{S:notation} introduces
notation and definitions.
Section~\ref{sec:categories} introduces the key definitions
of our computable category theory,
and discusses its relation to other work in the literature.

Section~\ref{S:curves} defines some algebraic curves 
used in Section~\ref{S:construction} to construct $\FF$,
and proves arithmetic properties of these curves
that are used in Section~\ref{S:inverse functor} to construct $\GG$
and prove enough properties of $\FF$ and $\GG$
to prove Theorem~\ref{T:fully faithful}.
Section~\ref{S:computability} constructs 
the computable analogues $\FFw$ and $\GGw$,
and proves Theorem~\ref{T:computable functors}.
Section~\ref{S:ordered fields} is something of a side remark:
it proves that for every $G \in \Graphsw$,
the field $\FFw(G)$ is isomorphic to a subfield of $\R$;
but $\FFw$ cannot be viewed as a functor to the category of ordered fields
if morphisms in the latter are required to respect the orderings.
Finally, Section~\ref{sec:consequences} 
explains the implications of our functors for computable model theory.
Many of the results there follow formally 
from \cite{HKSS}*{Theorem~3.1} and Theorem~\ref{T:computable functors},
so they could be stated more generally in terms of any computable
equivalence of categories of structures, but for concreteness, we state them
specifically for the categories of graphs and fields.

\section{Notation and definitions}\label{S:notation}

\subsection{Graphs}
\label{S:notation graphs}

Given a set $I$, let $\binom{I}{2}$ be the set of $2$-element subsets of $I$.
A (symmetric, irreflexive) \defi{graph} 
is a set $V$ equipped with a subset $E \subseteq \binom{V}{2}$;
then $\{(i,j) \in V \times V: \{i,j\} \in E\}$
is a symmetric irreflexive relation on $V$.
If $G$ is a graph $(V,E)$, then $\#G \colonequals \#V$;
call $G$ finite if $V$ is finite.
Define a \defi{morphism of graphs} $(V,E) \to (V',E')$ to be an
injection $f \colon V \injects V'$ 
such that for each $\{i,j\} \in \binom{V}{2}$,
we have $\{i,j\} \in E$ if and only if $\{f(i),f(j)\} \in E'$.
In other words, a morphism of graphs $(V,E) \to (V',E')$ is 
an isomorphism from $(V,E)$ onto an induced subgraph of $(V',E')$.
These notions define a category $\Graphs$.

\subsection{Category theory}
\label{S:category theory}

A \defi{full subcategory} of a category $\category$
is a category consisting of some the objects of $\category$
but all of the morphisms between pairs of these chosen objects.
Let $\FF \colon \category \to \mathbf{D}$ be a functor.
Then $\FF$ is \defi{full} (respectively, \defi{faithful})
if for any two objects $C_1,C_2 \in \category$,
the map $\Hom(C_1,C_2) \to \Hom(\FF(C_1),\FF(C_2))$
is surjective (respectively, injective).
The \defi{essential image} of $\FF$ is the full subcategory of $\mathbf{D}$
consisting of objects $D \in \mathbf{D}$ that are isomorphic
to $\FF(C)$ for some $C \in \category$.

\subsection{Fields and arithmetic geometry}

Let $\Fields$ be the category of fields,
with field homomorphisms as the morphisms.
If $X$ is a $k$-variety, and $L \supseteq k$ is a field extension,
let $X_L$ denote the base extension;
concretely, $X_L$ is the $L$-variety defined by the same equations as $X$,
but with coefficients considered to be in $L$.
A $k$-variety is \defi{integral} if it is irreducible
and the ring of functions on any Zariski open subset has no nilpotent elements.
A $k$-variety is \defi{geometrically integral} if it remains integral
after any extension of the ground field;
for example, the affine curve $x^2-2y^2=0$ over $\Q$ is integral
but not geometrically integral.
If $X$ is an integral $k$-variety, let $k(X)$ denote its function field.
If $(X_i)_{i \in I}$ is a collection of geometrically integral $k$-varieties
indexed by a set $I$,
let $k(\prod X_i)$ denote the direct limit of the function fields
$k(\prod_{i \in J} X_i)$ over all finite subsets $J \subseteq I$.
If $X$ is a geometrically integral curve, 
let $g_X \in \Z_{\ge 0}$ be its \defi{geometric genus},
defined as the dimension of the space of global $1$-forms
on the regular projective model of~$X$.

\subsection{Computable model theory}
\label{S:notation computable model theory}

Given a structure $M$, let $\dom(M)$ be its \defi{domain} (underlying set),
and let $\Delta(M)$ be its \defi{atomic diagram}
(the set of atomic sentences true in $M$).
If $M$ is a countable structure with $\dom(M)=\omega$, 
let $\deg M$ be the Turing degree of $\Delta(M)$,
and define the \defi{spectrum} of $M$ as
\label{defn:spectrum} 
$\Spec M \colonequals \{\deg N: N \isom M \textup{ and } \dom(N) = \omega\}$.
Given two such structures $M$ and $N$,
we say that $M$ is \defi{computable from $N$}, or write $M \le_T N$,
if $\Delta(M)$ is computable under a $\Delta(N)$-oracle,
or equivalently $\deg M \le_T \deg N$.


\section{Computable category theory}
\label{sec:categories}

\subsection{Computable functors}

By a \defi{category of structures on $\omega$},
we mean a subcategory $\category$ 
of the category of all first-order $\calL$-structures
with domain $\omega \colonequals \{0,1,2,\ldots\}$,
for some computable language $\calL$.
(In some cases, one might allow also finite subsets of $\omega$ as domains.)
Here are two examples:
\begin{itemize}
\item 
Let $\Graphsw$ be the category whose objects are 
the (symmetric irreflexive) graphs having underlying set $\omega$,
and whose morphisms are isomorphisms onto induced subgraphs.
\item
Let $\Fieldsw$ be the category whose objects are
the fields having underlying set $\omega$,
and whose morphisms are field homomorphisms.
\end{itemize}

There is nothing particularly effective about these categories.  
The requirement that the domain equal $\omega$
gives us the opportunity to consider computability questions about the
structures in a category, but a graph $G$ on $\omega$ with a noncomputable
edge relation would be an object of $\Graphsw$ in perfectly good standing.
Also, even for computable sets $E \subseteq \omega^2$, there is no general
way to determine whether $E$ actually is the edge relation of a symmetric irreflexive graph.

Effectiveness considerations arise when we consider functors
between these categories.  A functor maps objects to objects and morphisms
to morphisms, and we would like this process to be effective.
\begin{defn}
\label{defn:computablefunctor}
Let $\category$ and $\category'$ be categories of structures on $\omega$
(with respect to possibly different languages $\calL$ and $\calL'$).
A \defi{computable functor} is a functor $\FF \colon \category \to \category'$ 
for which there exist Turing functionals $\Phi$ and $\Psi$ such that
\begin{enumerate}[\upshape (i)]
\item\label{I:condition 1}
for every $S\in\category$, the function $\Phi^S$ computes 
(the atomic diagram of) the structure $\FF(S)$; and
\item\label{I:condition 2}
for every morphism $g \colon S\to T$ in $\category$, 
we have $\Psi^{S\oplus g\oplus T}=\FF(g)$ in $\category'$.
\end{enumerate}
\end{defn}

Computing the atomic diagram of $\FF(S)$ is equivalent
to computing every function and relation from $\calL'$ on $\FF(S)$ (and, in case $\calL'$
is infinite, doing so uniformly and also computing the value of each constant symbol).
One could state \eqref{I:condition 1} by saying that, 
for every $n\in\omega$, the function
$\Phi^S(\la n,\xvec\ra)$ is the value on input $\xvec$ 
of the $n$th symbol of the language $\calL'$, as interpreted in $\FF(S)$.  For relation symbols, this value is Boolean, while
for constants and function symbols, it is an element of $\FF(S)$, i.e., a natural number.
Condition~\eqref{I:condition 2} 
states that $\Psi^{S\oplus g\oplus T}$ computes
the morphism $\FF(g)$ in $\category'$, 
viewed as a map from $\dom(S)$ into $\dom(T)$.
Note that one is allowed to use the objects $S$ and $T$, 
in addition to the morphism $g$ between them, to compute $\FF(g)$;
this is natural if one thinks of the source and target 
as being part of the data describing a function.
Our proofs in Section~\ref{S:computability} 
that $\FFw$ and $\GGw$ are computable
also illustrate why including $S$ and $T$ is appropriate.

\begin{remark}
\label{R:type-2}
Our computable functors may also be called \defi{type-2 computable functors}.
The terminology ``type-2'' comes from computable analysis, where this
concept of computability is common; 
the book \cite{PER} is a standard reference for this topic.  
Computable analysis considers real numbers,
given by fast-converging Cauchy sequences of rationals.
Two such sequences may well represent the same real number, 
and equality of real numbers is not computable;
that is, no Turing functional $\Phi$
has the property that, for all fast-converging Cauchy sequences $f$ and $g$
with respective limits $x$ and $y$ in $\mathbb R$,
\[
	 \Phi^{f\oplus g}(0) = 
	\begin{cases}
		 1, &\text{ if $x=y$;}\\
		 0, &\text{ if $x \ne y$;}
	\end{cases}
\]
cf.~\cite{PER}*{p.~23, Fact~3}.
On the other hand, there does exist a Turing functional $\Psi$ such that,
for all such $f$ and $g$, 
$\Psi^{f\oplus g}$ is itself a fast-converging Cauchy sequence
with limit $x+y$, so addition is a type-2 computable function on $\R$,
as are the other arithmetic operations 
and many more standard functions on $\R$.
Likewise, here in category theory, 
we are computing functions not between natural numbers,
but between subsets of natural numbers.  This is clear in the case of the edge
relation on a graph, but in fact all countable structures 
(in countable first-order languages) 
turn out to be represented by subsets of $\omega$, 
just as all real numbers are.
\end{remark}

\subsection{Computable morphisms of functors}

\begin{definition}
\label{D:computable morphism}
Let $\FF_1, \FF_2 \colon \category \to \category'$ 
be computable functors.
A \defi{computable morphism of functors} 
(or \defi{computable natural transformation}) from $\FF_1$ to $\FF_2$
is a morphism of functors $\tau \colon \FF_1 \to \FF_2$
such that there exists a Turing functional that on input $S \in \category$
computes the morphism $\tau(S) \colon \FF_1(S) \to \FF_2(S)$.
\end{definition}

This notion leads in the usual way to the notion of 
\defi{computable isomorphism of functors}: this is 
a computable morphism of functors 
having a two-sided inverse that is again a computable morphism of functors.
Then, adapting the definition of equivalence of categories leads to
the following.

\begin{definition}
\label{D:equivalence}
Let $\category$ and $\category'$ be categories of structures on $\omega$.
A \defi{computable equivalence of categories} from $\category$ to $\category'$
is a pair of computable functors $\FF \colon \category \to \category'$
and $\GG \colon \category' \to \category$
together with computable isomorphisms $\GG\FF \to 1_{\category}$
and $\FF\GG \to 1_{\category'}$.
\end{definition}

In Definition~\ref{D:equivalence},
we may refer to $\FF$ alone as a computable equivalence of categories
if the other functors and isomorphisms exist.

\subsection{Related work}
\label{S:related work}

To a certain extent, the concept of computable functor 
has appeared before in computable model theory.
\defi{Turing-computable reducibility} 
(as defined, e.g., in \cite{KMV}*{\S 1}, by Knight, Miller, and Vanden Boom)
can be viewed as a version of it.
In that definition, one class $\C$ of computable structures
is Turing-computably reducible (or TC-reducible) to another such class $\D$
if there exists a Turing functional $\Phi$ that accepts as input the atomic
diagram $\Delta(S)$ of a structure $S\in\C$ and outputs the atomic diagram
$\Phi^{\Delta(S)}=\Delta(T)$ of a structure $T\in\D$.  Writing $\FF(S)$
for $T$, the further requirement is that $S\cong S'$ if and only if $\FF(S)\cong\FF(S')$.
So this definition essentially includes the first half of Definition
\ref{defn:computablefunctor} above, although stated only for
computable structures, not for arbitrary structures with domain $\omega$.
The second half is related to the preservation of the isomorphism relation,
but here Definition~\ref{defn:computablefunctor} is far stronger, requiring
the actual computation of an isomorphism $\FF(g)$ from $\FF(S)$ onto $\FF(S')$,
given an isomorphism $g$ from $S$ onto $S'$.  It would be reasonable to
investigate Definition~\ref{defn:computablefunctor} more fully,
especially in light of the work in~\cite{KMV}.

In parallel with our research into functors from graphs to fields,
Montalb\'an examined a notion that he calls
\defi{effective interpretation}.  This is detailed in \cite{M14}*{\S 5},
and is very much in the vein of the traditional model-theoretic
notion of interpretation, with effectiveness conditions added.
The functor we build in Section~\ref{S:construction} will allow
an effective interpretation of each graph $G$ in the field $\FF(G)$,
since the domain of each graph will be definable uniformly
by an existential formula in the corresponding field and the
edge relation of the graph will likewise be uniformly existentially
definable on that domain within that field.  Indeed, $\FF$ will have a
computable left-inverse functor, and it will actually be the case
that $\FF(G)$ always has an effective interpretation in $G$,
although this is not so obvious at first glance.  
These are examples of a more general result, 
obtained in \cite{HTM3}*{Theorems 5 and~12}
by Harrison-Trainor, Melnikov, Miller, and Montalb\'an 
in the wake of the present work, 
that the existence of a computable functor from
$\category$ to $\category'$ 
is equivalent to the uniform effective interpretability
of all elements of the class $\category'$ in the class $\category$.
(So the effective interpretation of $G$ in $\FF(G)$ is an outgrowth
of the computable left inverse functor for $\FF$, not of $\FF$ itself.)

The results in \cite{HKSS}
are proven largely by the construction of computable functors, although
not described in that way.  However, one could also ask the same questions
about categories known not to be complete.  For example, there is a
natural construction of a Boolean algebra $\FF(L)$ from a linear order $L$,
simply by taking the interval algebra of $L$, where the morphisms in each category
are simply homomorphisms of the structures.  On its face, this functor appears
to be neither full nor faithful, based on known results, and it does not have
a precise computable inverse functor on its image, although it may
come close to doing so.  It cannot have all of these properties,
because there does exist a linear order whose spectrum is not realized
by any Boolean algebra, as shown by Jockusch and Soare 
in \cite{JS}*{Theorem~1}.  (Here we use 
a generalization of the result in this article, namely,
that a computable equivalence of categories 
onto a strictly full subcategory
allows one to transfer spectra
from objects of the first category to objects of the second.
This generalization appears to have a straightforward direct proof,
and in any case it follows from effective bi-interpretability, hence
from \cite{HTM3}*{Theorem~12}, using \cite{M14}*{Lemma~5.3}.)
We suspect that similar results distinguishing
the properties of various everyday classes of countable structures
may yield further insights into effectiveness, fullness, faithfulness, and other
properties of functors among these classes, especially the incomplete ones listed in 
Section~\ref{S:computable model theory}.

\section{Curves}\label{S:curves}

Define polynomials
\begin{align*}
	p(u,v) &\colonequals u^4+16uv^3+10v^4+16v-4  \in \Q[u,v], \textup{ and}\\
	q(T,x,y) &\colonequals x^4+y^4+1 + T(x^4+xy^3+y+1) \in \Q[T,x,y].
\end{align*}
Let $X$ be the affine plane curve $p(u,v)=0$ over $\Q$.
For any field $F$ and $t \in F$,
let $Y_t$ be the affine plane curve $q(t,x,y)=0$ over $F$.
If we take $F=\Q(T)$ (the field of rational functions in one indeterminate)
and $t=T$, then we obtain a curve $Y_T$ over $\Q(T)$;
let $Y = Y_T$.
More generally, if $F \supseteq \Q$ and $t$ is transcendental over $\Q$,
then $Y_t$ is the base change of $Y$
by the field homomorphism $\Q(T) \to F$ sending $T$ to~$t$,
so $Y_t$ inherits many properties from $Y$.

The properties we need of these curves to construct the functor $\FF$
are contained in 
parts \eqref{I:geometrically integral}--\eqref{I:Y has no real points}
of the following lemma.
Later, to prove that for every $G \in \Graphsw$ 
the field $\FF(G)$ is isomorphic to a subfield of $\R$,
we will also need~\eqref{I:20 to infinity}.

\begin{lemma}
\label{L:properties of the curves}
\hfill
\begin{enumerate}[\upshape (1)]
\item \label{I:geometrically integral} 
Both $X$ and $Y$ are geometrically integral.
\item \label{I:same genus} 
We have $g_X = g_Y > 1$.
\item \label{I:no automorphisms} 
Even after base field extension,
$X$ and $Y$ have no nontrivial birational automorphisms.
\item \label{I:non-isotrivial}
Even after base field extension,
there is no birational map from $Y$ to any curve definable over a 
finite extension of $\Q$.
\item \label{I:X(Q)}
We have $X(\Q)=\emptyset$.
\item \label{I:X has real points}
We have $u(X(\R))=\R$.
\item \label{I:Y has no real points}
There exists an open neighborhood $U$ of $0$ in $\R$
such that for each $t \in U$
we have $Y_t(\R) = \emptyset$.
\item \label{I:20 to infinity}
If $t \in [20,\infty)$, 
then $Y_t$ has a real point with $x$-coordinate in $[1,2]$.
\end{enumerate}
\end{lemma}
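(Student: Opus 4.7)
The plan is to split the eight parts into three clusters handled by different techniques: the algebraic-geometric facts \eqref{I:geometrically integral}--\eqref{I:non-isotrivial}, the diophantine statement \eqref{I:X(Q)}, and the real-analysis statements \eqref{I:X has real points}--\eqref{I:20 to infinity}. For \eqref{I:geometrically integral} and \eqref{I:same genus}, I would verify that the projective closures $\overline{X} \subset \PP^2_{\Q}$ and $\overline{Y} \subset \PP^2_{\Q(T)}$ are smooth plane quartics, by checking that the homogenized defining polynomial and its three partial derivatives have no common zero over $\Qbar$ (respectively $\overline{\Q(T)}$). Smoothness of a projective plane curve implies geometric integrality, and the genus formula for a smooth plane curve of degree $d$ gives $g_X = g_Y = \binom{3}{2} = 3 > 1$. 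For \eqref{I:no automorphisms}, since a smooth plane quartic is canonically embedded, every birational automorphism comes from an element of $\PGL_3$ fixing the defining polynomial up to scalar; writing a general $3 \times 3$ matrix $A$ and comparing coefficients of $p \circ A = \lambda p$ (and likewise for $q$) should show that only the identity qualifies. For \eqref{I:non-isotrivial}, I would rule out isotriviality by exhibiting two specializations $t_1 \ne t_2$ in $\Qbar$ for which a suitable invariant of smooth plane quartics (for instance, a Dixmier--Ohno invariant) takes different values on $Y_{t_1}$ and $Y_{t_2}$; a family birational to a constant family must have constant modular invariants on its smooth fibers.

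For \eqref{I:X(Q)}, I would use a $2$-adic descent. A routine Newton-polygon analysis on the five terms of $p(u,v)$ forces any solution in $\Q_2^2$ to have $u, v \in \Z_2$. Reducing mod~$2$ then gives $u^4 \equiv 0$, so $2 \mid u$; substituting $u = 2u_1$ and reducing mod~$4$ forces $2 \mid v$; substituting $v = 2v_1$ and dividing the equation by $4$ leaves $4 u_1^4 + 64 u_1 v_1^3 + 40 v_1^4 + 8 v_1 - 1 = 0$, whose mod~$2$ reduction reads $-1 \equiv 0$, a contradiction. Hence $X(\Q_2) = \emptyset$ and therefore $X(\Q) = \emptyset$.

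For \eqref{I:X has real points}--\eqref{I:20 to infinity}, in each case I would apply the intermediate value theorem to the even-degree polynomial $v \mapsto p(u_0,v)$ (respectively $y \mapsto q(t,x,y)$), which has positive leading coefficient and tends to $+\infty$ as the variable goes to $\pm\infty$. For \eqref{I:X has real points}, the test value $v = 0$ handles $|u_0| \le \sqrt{2}$ via $p(u_0,0) = u_0^4 - 4 \le 0$, while the test value $v = -u_0$ handles $|u_0| \ge \sqrt{2}$ via $p(u_0,-u_0) = -5 u_0^4 - 16 u_0 - 4 < 0$, where the dominance of the $-5 u_0^4$ term is a direct check. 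For \eqref{I:Y has no real points}, applying AM--GM to $|xy^3| \le \tfrac14 x^4 + \tfrac34 y^4$ and a Young-type bound to $|ty|$ against $y^4$ gives $q(t,x,y) \ge (1 - c|t|)(x^4 + y^4) + (1 - O(|t|^{4/3}))$, which is uniformly positive on $\R^2$ for $|t|$ small. For \eqref{I:20 to infinity}, direct evaluation yields $q(t,2,0) = 17(1+t) > 0$ and $q(t,2,-2t) = 17 + 17t - 2t^2 < 0$ for $t \ge 20$, producing a sign change in $y$ and hence a real root with $x = 2 \in [1,2]$.

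The main obstacle will be \eqref{I:no automorphisms} and \eqref{I:non-isotrivial}: ruling out hidden automorphisms of these specific plane quartics via $\PGL_3$ coefficient-matching is in principle mechanical but generates many cases, and a rigorous non-isotriviality argument either requires importing Dixmier--Ohno invariant theory for plane quartics or finding a slicker \emph{ad hoc} argument (for instance, by comparing reduction types of $Y$ at different places of $\Q(T)$).
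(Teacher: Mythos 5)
Your treatment of parts (5)--(8) is correct and essentially complete, and in places takes a genuinely different route from the paper: for (5) you run a $2$-adic descent on the affine equation (your valuation analysis and the successive substitutions $u=2u_1$, $v=2v_1$ check out), where the paper instead reduces the projective model modulo $8$; for (7) you give an explicit AM--GM estimate, where the paper deduces the statement from $\widetilde{Y}_0(\R)=\emptyset$ by a compactness argument in $\PP^2(\R)$; for (8) your test point $(x,y)=(2,-2t)$, giving $q=17+17t-2t^2<0$, works just as well as the paper's choice $y=-3$. Parts (1)--(2) via the Jacobian criterion and the plane-quartic genus formula are fine in principle, though verifying smoothness of $\widetilde{Y}$ over $\overline{\Q(T)}$ directly is already laborious.

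The genuine gap is in (3) and (4), exactly where you flag ``the main obstacle.'' Deciding by hand that a \emph{specific} smooth plane quartic has trivial automorphism group by matching coefficients of $p\circ A=\lambda p$ over a general $A\in\PGL_3$, or computing Dixmier--Ohno invariants of $Y_t$, is not realistically executable, and you offer no workable alternative. The missing idea is that the coefficients of $p$ and $q$ were engineered so that everything can be read off from good specializations: $\widetilde{X}$ reduces modulo $5$ to $u^4+uv^3+vw^3+w^4=0$ over $\F_5$, and $\widetilde{Y}$ specializes at $T=\infty$ to $x^4+xy^3+yz^3+z^4=0$ over $\Q$; these are exactly curves treated in Poonen, \emph{Varieties without extra automorphisms III} (Case I with $n=2$, $d=4$, $c=1$), which shows they are smooth, geometrically integral, of genus $3$, and without nontrivial birational automorphisms even after base extension. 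Since the automorphism functor of a relative curve of genus $\ge 2$ is finite and unramified, automorphisms (and isomorphisms) specialize injectively, so (1)--(3) for $X$ and $Y$ follow from the corresponding facts about the special fibers. The same device disposes of (4) with no invariant theory at all: a birational map from $Y$ to a curve definable over a finite extension of $\Q$ would force the two smooth specializations $\widetilde{Y}_0\colon x^4+y^4+z^4=0$ and $\widetilde{Y}_\infty$ to be isomorphic over $\Qbar$, which is impossible because the Fermat quartic has the visible automorphism $(x,y,z)\mapsto(-x,y,z)$ while $\widetilde{Y}_\infty$ has none. Without such a specialization argument (or a citation doing equivalent work), your proofs of (3) and (4) do not go through.
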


\begin{proof}
The projective closure $\widetilde{X}$ of $X$ 
specializes under reduction modulo $5$ to the curve
\[
	\widetilde{X}_5 \colon u^4 + uv^3 + vw^3 + w^4 = 0
\]
in $\PP^2_{\F_5}$.
The projective closure of $Y$ specializes at $T=0$ and $T=\infty$
to the curves 
\[
	\widetilde{Y}_0 \colon x^4+y^4+z^4=0 \qquad \textup{and} \qquad
	\widetilde{Y}_\infty \colon x^4 + xy^3 + yz^3 + z^4=0
\]
in $\PP^2_\Q$, respectively.
By \cite{Poonen-without3}*{Case~I with $n=2$, $d=4$, $c=1$}, 
$\widetilde{X}_5$ and $\widetilde{Y}_\infty$
are smooth, projective, geometrically integral plane curves of genus~$3$
with no nontrivial birational automorphisms even after base extension.
It follows that $X$ and $Y$ have the same properties,
except for being projective.
\begin{enumerate}[\upshape (1)]
\item Explained above.
\item By the above, $g_X=g_Y=3$.
\item Explained above.
\item If there were such a birational map,
then the specializations $\widetilde{Y}_0$ and $\widetilde{Y}_\infty$
would be isomorphic over $\Qbar$.
But the former has a nontrivial automorphism $(x,y,z) \mapsto (-x,y,z)$.
\item 
The given model of $\widetilde{X}$ (viewed over $\Z$) reduces
modulo~$8$ to $u^4 + 2 v^4 + 4 w^4 = 0$,
which has no solutions in $\PP^2(\Z/8\Z)$.
Now $X(\Q) \injects \widetilde{X}(\Q) = \widetilde{X}(\Z) \to \widetilde{X}(\Z/8\Z) = \emptyset$.
\item 
Fix $u \in \R$.
Then $\lim_{v \to +\infty} p(u,v) = +\infty$, 
so by the intermediate value theorem it suffices to find $v \in \R$
such that $p(u,v)<0$.
If $|u| < \sqrt{2}$, then $v=0$ works.
If $|u| \ge \sqrt{2}$, then $p(u,-u) = -5u^4 - 16u - 4 < 0$ by calculus.
\item 
This follows from $\widetilde{Y}_0(\R)=\emptyset$ and compactness.
\item 
Suppose that $t \ge 20$ and $x \in [1,2]$.
Then $q(t,x,-3) \le 2^4+3^4+1 + 20(2^4-27-3+1) < 0$ but $q(t,x,0) > 0$,
so there exists $y \in \R$ with $q(t,x,y)=0$.\qedhere
\end{enumerate}
\end{proof}

\section{Construction of the functor}\label{S:construction}

\subsection{Construction}
\label{S:construction of F}

We now define a functor $\FF \colon \Graphs \to \Fields$.
Suppose that we are given a graph $G = (V,E)$.
Let $K = \Q(\prod_{i \in V} X)$.
Let $u_i,v_i \in K$ correspond to the rational functions $u,v$ 
on the $i$th copy of $X$.
Thus $(u_i)_{i\in V}$ is a transcendence basis for $K/\Q$.
For $\{i,j\} \in \binom{V}{2}$,
define the $K$-curve $Z_{\{i,j\}}$ as $Y_{u_i u_j}$ if $\{i,j\} \in E$,
and $Y_{u_i+u_j}$ if $\{i,j\} \notin E$.
Define $\FF(G) \colonequals K(\prod Z_{\{i,j\}})$,
where the product is over $\{i,j\} \in \binom{V}{2}$.
A morphism $G \to G'$ induces 
an obvious field homomorphism $\FF(G) \to \FF(G')$.
We obtain a functor $\FF$.

\begin{remark}
If $G$ is finite, then $\#\FF(G)=\aleph_0$.
If $G$ is infinite, then $\#\FF(G)=\#G$.
\end{remark}

\subsection{Properties}

Here we prove properties of $\FF(G)$
that will enable us to recover $G$ from $\FF(G)$,
or more precisely, to prove that $\FF$ is fully faithful.
In the proofs in this section, 
labels like \eqref{I:same genus}
refer to the parts of Lemma~\ref{L:properties of the curves}.

\begin{lemma}
\label{L:no rational maps}
Fix $G$.
Let $L$ be any field extension of $K$.
Consider the base changes to $L$ of 
$X$ and all the curves $Y_{u_i u_j}$ and $Y_{u_i+u_j}$.
The only nonconstant rational maps between these curves over $L$
are the identity maps from one of them to itself.
\end{lemma}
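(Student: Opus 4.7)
The plan is to classify all nonconstant rational maps among the listed curves by reducing to isomorphisms of smooth projective models and then invoking the rigidity statements of Lemma~\ref{L:properties of the curves}, especially parts \eqref{I:no automorphisms} and \eqref{I:non-isotrivial}. By \eqref{I:geometrically integral} and \eqref{I:same genus}, every curve on the list is geometrically integral of genus~$3$, so over $L$ each has a smooth projective model of genus~$3$. Riemann--Hurwitz applied to a dominant morphism of degree $d$ between smooth projective curves of genus~$3$ gives $4 \ge 4d$, forcing $d=1$; thus every nonconstant rational map among our curves is automatically birational and extends to an isomorphism of smooth projective models. Self-isomorphisms other than the identity are ruled out by \eqref{I:no automorphisms}, which passes from $Y/\Q(T)$ to every base change $Y_s$.

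It remains to forbid nontrivial isomorphisms $\phi$ between two \emph{distinct} smooth projective models on the list. The first key tool is descent: both curves are already defined over the subfield $k \subseteq L$ generated over $\Q$ by the parameters appearing in the two curves, and by \eqref{I:no automorphisms} their automorphism schemes are trivial even after base change. Hence $\Isom_k(\widetilde{C}_1,\widetilde{C}_2)$, being an $\Aut(\widetilde{C}_2)_k$-torsor whenever nonempty, is either empty or equal to $\Spec k$. The $L$-point given by $\phi$ shows it is nonempty, hence equals $\Spec k$, so $\phi$ itself descends to an isomorphism over $k$.

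If $\widetilde{C}_1 = \widetilde{X}$ and $\widetilde{C}_2 = \widetilde{Y}_s$ (or vice versa), then the parameter $s \in \{u_iu_j,\ u_i+u_j\}$ is a nonconstant polynomial in algebraically independent $u_i$'s, hence transcendental over $\Q$. Thus $\Q(s) \cong \Q(T)$ via $s \leftrightarrow T$, and the descended isomorphism transports to an isomorphism $\widetilde{X}_{\Q(T)} \isomto \widetilde{Y}$ over $\Q(T)$. Since $\widetilde{X}$ is defined over $\Q$, this contradicts \eqref{I:non-isotrivial}.

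The remaining case $\widetilde{C}_1 = \widetilde{Y}_s$, $\widetilde{C}_2 = \widetilde{Y}_{s'}$ with $s \ne s'$ in our parameter set is the main obstacle. I first check that $s$ and $s'$ are algebraically independent over $\Q$: two distinct unordered pairs $\{i,j\} \ne \{k,l\}$ share at most one index, so a short case-by-case verification (disjoint pairs; pairs sharing one index) using the algebraic independence of $u_1,u_2,\ldots$ gives the independence. Then $\Q(s,s') \cong \Q(T_1,T_2)$ via $s \leftrightarrow T_1,\ s' \leftrightarrow T_2$, and the descended isomorphism transports to $\widetilde{Y}_{T_1} \cong \widetilde{Y}_{T_2}$ over $\Q(T_1,T_2)$. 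Clearing denominators, this isomorphism spreads out to one of the corresponding families over some open $U \subseteq \mathbb{A}^2_\Q$; for cofinitely many $c \in \Q$ the line $\{T_1 = c\}$ meets $U$ in a dense open, and restricting to its generic point yields an isomorphism $(\widetilde{Y}_c)_{\Q(T_2)} \isomto \widetilde{Y}_{T_2}$ over $\Q(T_2)$. Since $\widetilde{Y}_c$ is defined over $\Q$, this again contradicts \eqref{I:non-isotrivial}. The descent and the spreading-out-and-specialize steps, each powered respectively by \eqref{I:no automorphisms} and \eqref{I:non-isotrivial}, form the technical heart of the argument.
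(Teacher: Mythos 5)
Your proposal is correct and follows essentially the same route as the paper: equal genus $>1$ forces every nonconstant map to be birational (the paper packages your Riemann--Hurwitz step as Lemma~\ref{L:two curves}), part~\eqref{I:no automorphisms} kills nontrivial self-maps, and part~\eqref{I:non-isotrivial} plus specialization of one parameter to $\Qbar$ kills maps between distinct curves; your descent-via-$\Isom$-torsor and spreading-out steps are rigorous fillings-in of what the paper's proof asserts in one line (``specialize $t'$ to an element of $\Qbar$ while leaving $t$ transcendental''). One small omission: your algebraic-independence check only treats distinct index pairs, but the list contains both $Y_{u_iu_j}$ and $Y_{u_i+u_j}$ for the \emph{same} pair $\{i,j\}$, so you also need $u_iu_j$ and $u_i+u_j$ to be algebraically independent over $\Q$ --- which holds, since $u_i$ and $u_j$ are roots of $z^2-(u_i+u_j)z+u_iu_j$ and hence algebraic over $\Q(u_iu_j,\,u_i+u_j)$, forcing that field to have transcendence degree~$2$.
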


\begin{proof}
By \eqref{I:same genus}, all the curves have the same genus.
By \eqref{I:no automorphisms} and Lemma~\ref{L:two curves}, 
it suffices to show that no two distinct curves in this list
are birational even after base field extension.
By~\eqref{I:non-isotrivial}, this is already true for $X$ and $Y_t$
for any transcendental $t$.
If $t,t'$ are algebraically independent over $\Q$,
and $Y_t$ and $Y_{t'}$ become birational after base field extension,
then we can specialize $t'$ to an element of $\Qbar$
while leaving $t$ transcendental, contradicting~\eqref{I:non-isotrivial}.
The previous two sentences apply in particular to any $t$ and $t'$
taken from the $u_i u_j$ and the $u_i+u_j$.
\end{proof}

\begin{lemma}
\label{L:points over F(G)}
Let $G=(V,E)$ be a graph.
Let $x_{ij},y_{ij} \in \FF(G)$ correspond to the 
rational functions $x,y$ on $Z_{\{i,j\}}$.
\begin{enumerate}[\upshape (i)]
\item \label{I:points of X}
We have $X(\FF(G)) = \{(u_i,v_i) : i \in V\}$.
\item \label{I:points of Y for edges in E}
If $\{i,j\} \in E$,
then $Y_{u_i u_j}(\FF(G)) = \{(x_{ij},y_{ij})\}$
and $Y_{u_i+u_j}(\FF(G))=\emptyset$.
\item \label{I:points of Y for edges outside E}
If $\{i,j\} \notin E$,
then $Y_{u_i u_j}(\FF(G)) = \emptyset$ 
and $Y_{u_i+u_j}(\FF(G))= \{(x_{ij},y_{ij})\}$.
\end{enumerate}
\end{lemma}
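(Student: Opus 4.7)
The plan is to interpret each claimed $L$-point of a target curve $C$---where $L=\FF(G)$ and $C$ is one of $X$, $Y_{u_iu_j}$, or $Y_{u_i+u_j}$---as a rational map from a suitable variety into $C$, and then apply rigidity of maps between curves of genus $\ge 2$ together with Lemma~\ref{L:no rational maps} and the arithmetic input of Lemma~\ref{L:properties of the curves}. The reverse inclusions $(u_i,v_i)\in X(L)$ and $(x_{ij},y_{ij})\in Z_{\{i,j\}}(L)$ are immediate from the construction, so only $\subseteq$ requires proof.

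First I would reduce to a finite subproduct. The field $L$ is the directed union of subfields $L_{J,M}\colonequals K_J(\prod_{e\in M}Z_e)$ with $K_J\colonequals\Q(\prod_{i\in J}X)$, over finite $J\subseteq V$ and $M\subseteq\binom{J}{2}$; any $L$-point of a variety of finite type factors through some $L_{J,M}$, and hence corresponds, over $K_J$, to a rational map from the $K_J$-variety $\prod_{e\in M}Z_e$ (a product of $K_J$-curves of genus $3$) into $C$. By the standard rigidity consequence of de Franchis's finiteness theorem---any rational map from a product of smooth projective curves of genus $\ge 2$ to a curve of genus $\ge 2$ factors through a projection to one factor---this map factors through projection to some $Z_{e_0}$, leaving $\phi\colon Z_{e_0}\dashrightarrow C$ over $K_J$. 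Lemma~\ref{L:no rational maps} then forces $\phi$ to be either constant or the identity map of a single curve, the latter requiring $Z_{e_0}=C$.

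The identity case produces precisely the tautological points. Since $q(T,x,y)$ is linear in $T$ with nonzero coefficient, the equality $Y_t=Y_{t'}$ of subvarieties of $\mathbb{A}^2_{K_J}$ forces $t=t'$; hence $Z_{e_0}=C$ pins down $e_0=\{i,j\}$ and requires the edge/non-edge status of $\{i,j\}$ in $G$ to match the defining rule for $Z_{e_0}$. In each of parts \eqref{I:points of X}--\eqref{I:points of Y for edges outside E} this yields the single listed tautological point when such $e_0$ exists and nothing otherwise; for $C=X$ the identity case is vacuous, since no $Z_e$ is birational to $X$ by Lemma~\ref{L:no rational maps}. The constant case in turn requires $X(K_J)=\{(u_i,v_i):i\in J\}$ and $Y_t(K_J)=\emptyset$ for $t\in\{u_iu_j,u_i+u_j\}$. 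For $X(K_J)$ I iterate the same rigidity argument one level down: a $K_J$-point is a rational map $\prod_{i\in J}X\dashrightarrow X$ over $\Q$, which factors through some projection, whose restriction to the chosen $X$-factor is either the identity (yielding a tautological $(u_i,v_i)$) or constant, the constant option being ruled out by $X(\Q)=\emptyset$ from Lemma~\ref{L:properties of the curves}\eqref{I:X(Q)}.

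For $Y_t(K_J)=\emptyset$ I would use a real-point density argument: given $x_0,y_0\in K_J$ with $q(t,x_0,y_0)=0$, Lemma~\ref{L:properties of the curves}\eqref{I:X has real points} lets me choose $P_k\in X(\R)$ with any prescribed real values $u(P_k)=r_k$ for $k\in J$, arranging $r_ir_j$ (or $r_i+r_j$) to lie in the neighborhood of $0$ furnished by Lemma~\ref{L:properties of the curves}\eqref{I:Y has no real points} while simultaneously avoiding the (proper Zariski-closed) polar locus of $x_0,y_0$; evaluating produces a real point of $Y_{t_0}$ with $t_0$ in that neighborhood, contradicting $Y_{t_0}(\R)=\emptyset$. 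The main obstacle I expect is formulating rigidity cleanly over the non-algebraically-closed base $K_J$ so that the single-factor reduction actually hands us a map to which Lemma~\ref{L:no rational maps} applies; a secondary technicality is the density claim in the real-point step, which follows because the polar divisor of $x_0,y_0$, intersected with $X(\R)^J$, has real codimension at least one and so cannot cover the open set of tuples $(r_k)$ producing $t_0$ in the given neighborhood.
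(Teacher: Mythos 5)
Your proof is correct and follows essentially the same route as the paper's: reduce to a finite subproduct, identify $L$-points with rational maps (Lemma~\ref{L:points over function field}), apply Lemmas~\ref{L:rational maps from a product} and~\ref{L:no rational maps} to isolate the tautological points, rule out constant maps to $X$ by iterating the argument and invoking $X(\Q)=\emptyset$, and kill $Y_t(K)$ by a real-point argument using parts \eqref{I:X has real points} and~\eqref{I:Y has no real points} of Lemma~\ref{L:properties of the curves}. The only (immaterial) difference is in that last step: the paper embeds all of $K$ into $\R$ by sending the $u_i$ to algebraically independent real numbers near $0$, whereas you evaluate the putative point at real points of $\prod X$ chosen off the polar locus, which requires (and you correctly supply) the extra density observation.
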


\begin{proof}
\hfill
\begin{enumerate}[\upshape (i)]
\item 
By definition, $\FF(G)$ is the direct limit of $K(Z)$,
where $Z$ ranges over finite products of the $Z_{\{i,j\}}$.
Thus, by Lemma~\ref{L:points over function field}, 
each point in $X(\FF(G))$ corresponds to a rational map from some $Z$ 
to the base change $X_K$.
By Lemmas \ref{L:rational maps from a product}
and~\ref{L:no rational maps}
every such rational map is constant.
In other words, $X(\FF(G))=X(K)$.

Similarly, 
by Lemma~\ref{L:points over function field}, 
each point in $X(K)$ corresponds to a rational map
from some finite power of $X$ to $X$.
By~\eqref{I:X(Q)}, the rational map is nonconstant.
By Lemmas \ref{L:rational maps from a product}
and~\ref{L:no rational maps}
it is the $i$th projection for some $i$.
The corresponding point in $X(K)$ is $(u_i,v_i)$.
\item
Suppose that $\{i,j\} \in E$.
The same argument as for~\eqref{I:points of X}
shows that $Y_{u_i u_j}(\FF(G)) = Y_{u_i u_j}(K) \union \{(x_{ij},y_{ij})\}$,
the last point coming from the identity $Z_{\{i,j\}} \to Y_{u_i u_j}$.
By~\eqref{I:X has real points}, 
we may embed $K$ in $\R$ so that the $u_i$ are mapped
to algebraically independent real numbers so close to zero
that $Y_{u_i u_j}(\R) = \emptyset$ by~\eqref{I:Y has no real points}.
Thus $Y_{u_i u_j}(K)=\emptyset$.
So $Y_{u_i u_j}(\FF(G)) = \{(x_{ij},y_{ij})\}$.

The argument for $Y_{u_i+u_j}(\FF(G))=\emptyset$
is the same, except now that 
$Z_{\{i,j\}}$ is not birational to $Y_{u_i u_j}$.
\item 
The argument is the same as for~\eqref{I:points of Y for edges in E}.\qedhere
\end{enumerate}
\end{proof}

\section{Construction of the inverse functor}\label{S:inverse functor}

\subsection{Construction}

Let $\E$ be the essential image of $\FF$.
We may view $\FF$ as a functor $\Graphs \to \E$.
We now construct an essential inverse $\GG \colon \E \to \Graphs$ of $\FF$.

Suppose that $F$ is an object of $\E$.
Define $V \colonequals X(F)$.
For $i \in V = X(F)$, let $u_i$ be the first coordinate of $i$.
For $\{i,j\} \in \binom{V}{2}$,
Lemma~\ref{L:points over F(G)}(\ref{I:points of Y for edges in E},\ref{I:points of Y for edges outside E})
shows that exactly one of $Y_{u_i u_j}$ and $Y_{u_i+u_j}$ has an $F$-point.
Let $E$ be the set of $\{i,j\} \in \binom{V}{2}$
for which it is $Y_{u_i u_j}$ that has a $F$-point.
Let $\GG(F)$ be the graph $(V,E)$.

Suppose that $f \colon F \to F'$ is a morphism of $\E$.
We want to define a morphism of graphs $g \colon \GG(F) \to \GG(F')$. 
On vertices, $g$ is the map $X(F) \to X(F')$ induced by $f$.
If $\{i,j\}$ is an edge of $\GG(F)$,
then $Y_{u_i u_j}$ has an $F$-point,
and applying $f$ shows that $Y_{f(u_i) f(u_j)}$ has an $F'$-point,
so $\{g(i),g(j)\}$ is an edge of $\GG(F')$.
If $\{i,j\}$ is \emph{not} an edge of $\GG(F)$,
then the analogous argument using $Y_{u_i+u_j}$
shows that $\{g(i),g(j)\}$ is \emph{not} an edge of $\GG(F')$.
Thus $g$ is a morphism of graphs.
Define $\GG(f) \colonequals g$.
This completes the specification of a functor~$\GG$.

\subsection{Properties}

For each graph $G$, let $\epsilon_G \colon G \to X(\FF(G)) = \GG(\FF(G))$ 
be the map sending $i$ to $(u_i,v_i)$.

\begin{proposition}
\label{P:GF=1}
We have $\GG \FF \isom 1_{\Graphs}$.
\end{proposition}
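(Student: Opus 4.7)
The plan is to verify that the family $\epsilon = (\epsilon_G)_{G \in \Graphs}$ is a natural isomorphism from $1_{\Graphs}$ to $\GG\FF$. This splits into three tasks: checking that each $\epsilon_G$ is a morphism of graphs, that it is bijective (hence an isomorphism in $\Graphs$, since morphisms of graphs are isomorphisms onto induced subgraphs), and that the squares expressing naturality commute.

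First I would unpack $\GG(\FF(G))$ using the construction of $\GG$. Its vertex set is $X(\FF(G))$, which by Lemma~\ref{L:points over F(G)}\eqref{I:points of X} equals $\{(u_i,v_i) : i \in V\}$. The map $i \mapsto (u_i,v_i)$ is therefore surjective onto this vertex set, and it is injective because the $u_i$ form a transcendence basis of $K/\Q$ and in particular are pairwise distinct. Thus $\epsilon_G$ is a bijection on vertices. For the edge condition, I would fix $\{i,j\} \in \binom{V}{2}$ and apply Lemma~\ref{L:points over F(G)}\eqref{I:points of Y for edges in E} and~\eqref{I:points of Y for edges outside E}: the edge $\{i,j\}$ lies in $E$ if and only if $Y_{u_i u_j}(\FF(G)) \ne \emptyset$, which by the definition of the edge relation of $\GG(\FF(G))$ is exactly the condition that $\{\epsilon_G(i),\epsilon_G(j)\}$ be an edge of $\GG(\FF(G))$. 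Hence $\epsilon_G$ preserves and reflects edges, so it is a morphism of graphs, and being bijective, it is an isomorphism.

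For naturality, fix a morphism $h \colon G \to G'$ in $\Graphs$. By construction of $\FF$ on morphisms (Section~\ref{S:construction of F}), $\FF(h) \colon \FF(G) \to \FF(G')$ sends the element $u_i \in \FF(G)$ to $u_{h(i)} \in \FF(G')$ (and similarly for $v_i$), because the factor $X$ indexed by $i$ in the construction of $\FF(G)$ is sent to the factor indexed by $h(i)$ in $\FF(G')$. By the definition of $\GG$ on morphisms, $\GG(\FF(h))$ acts on the vertex set $X(\FF(G))$ as the map induced by $\FF(h)$, so it sends $(u_i,v_i)$ to $(u_{h(i)},v_{h(i)})$. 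This is precisely $\epsilon_{G'}(h(i))$, so $\GG(\FF(h)) \circ \epsilon_G = \epsilon_{G'} \circ h$, giving naturality.

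I do not expect any serious obstacle: all the substantive geometric content has already been concentrated into Lemma~\ref{L:points over F(G)}, and what remains is bookkeeping. The only step requiring a moment of care is confirming that $\FF(h)$ sends $u_i$ to $u_{h(i)}$; this is a direct consequence of the "obvious field homomorphism" referred to at the end of Section~\ref{S:construction of F}, since that homomorphism is defined by reindexing the factors $X$ and $Z_{\{i,j\}}$ via $h$, and one should just verify that $h$ being a morphism of graphs (preserving the edge/non-edge distinction) ensures that $Z_{\{i,j\}}$ maps to $Z_{\{h(i),h(j)\}}$ compatibly, so that the coordinates $u_i,v_i,x_{ij},y_{ij}$ pull back to $u_{h(i)},v_{h(i)},x_{h(i)h(j)},y_{h(i)h(j)}$ respectively.
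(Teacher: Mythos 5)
Your proof is correct and follows essentially the same route as the paper's: both reduce everything to Lemma~\ref{L:points over F(G)}, establish the same chain of equivalences for the edge relation, and note that $\epsilon_G$ is natural in $G$ (you simply spell out the naturality square that the paper dismisses with ``varies functorially with $G$''). No gaps.
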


\begin{proof}
By Lemma~\ref{L:points over F(G)}, $\epsilon_G$ is a bijection,
and in fact a graph isomorphism since the following are equivalent
for a pair $\{i,j\}$:
\begin{itemize}
\item $\{i,j\}$ is an edge of $G$;
\item $Y_{u_i u_j}(\FF(G)) \ne \emptyset$;
\item There is an edge between the vertices
$(u_i,v_i)$ and $(u_j,v_j)$ of $\GG(\FF(G))$.
\end{itemize}
The isomorphism $\epsilon_G$ varies functorially with $G$.
\end{proof}

\begin{proposition}
\label{P:G is faithful}
The functor $\GG$ is faithful.
\end{proposition}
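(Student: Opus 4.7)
The plan is to show that if $f_1, f_2 \colon F \to F'$ in $\E$ induce the same graph morphism under $\GG$, then they agree on a field-theoretic generating set of $F$ over $\Q$, and hence on all of $F$. Concretely, after fixing isomorphisms $F \isom \FF(G)$ and $F' \isom \FF(G')$, Lemma~\ref{L:points over F(G)} furnishes canonical elements $u_i, v_i \in F$ (indexed by vertices $i$ of $G$), and, for each pair $\{i,j\} \in \binom{V}{2}$, a canonical pair $(x_{ij}, y_{ij}) \in F^2$: the unique $F$-point of whichever of $Y_{u_i u_j}$ or $Y_{u_i+u_j}$ coincides with $Z_{\{i,j\}}$. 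By the very construction of $\FF(G) = K(\prod Z_{\{i,j\}})$, these elements generate $F$ as a field over $\Q$; analogous primed data $u'_k, v'_k, x'_{kl}, y'_{kl}$ generate $F'$.

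Next I would unpack the condition $\GG(f_1) = \GG(f_2)$. On vertices it says: for every $i$, the pair $(f_l(u_i), f_l(v_i)) \in X(F')$ is independent of $l$; write it as $(u'_{\sigma(i)}, v'_{\sigma(i)})$. This immediately gives $f_1(u_i) = f_2(u_i)$ and $f_1(v_i) = f_2(v_i)$, and identifies $\sigma$ as a graph morphism $G \to G'$, so $\sigma$ preserves the edge relation and its complement.

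The crux is then showing $f_1(x_{ij}) = f_2(x_{ij})$ and likewise for $y_{ij}$. Applying $f_l$ to the defining relation $q(u_i u_j, x_{ij}, y_{ij}) = 0$ (or the sum variant, in the non-edge case) exhibits $(f_l(x_{ij}), f_l(y_{ij}))$ as an $F'$-point of $Y_{t}$, where $t = f_l(u_i) f_l(u_j) = u'_{\sigma(i)} u'_{\sigma(j)}$ (or the sum) is independent of $l$. Because $\sigma$ respects edges, this curve is precisely $Z_{\{\sigma(i),\sigma(j)\}}$ in the data for $F'$, so by Lemma~\ref{L:points over F(G)} it has a unique $F'$-point, forcing the images under $f_1$ and $f_2$ to coincide.

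The main obstacle I anticipate is bookkeeping rather than any real difficulty: namely, confirming that the elements $u_i, v_i, x_{ij}, y_{ij}$ really do generate $F$ over $\Q$ (so that agreement on them suffices). This is essentially immediate from the construction of $\FF(G)$ as an iterated function-field extension, transported along the chosen isomorphism $F \isom \FF(G)$. With that in hand, the two field homomorphisms $f_1, f_2$ agree on a generating set, so $f_1 = f_2$, establishing faithfulness.
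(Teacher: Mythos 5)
Your argument is correct and is essentially the paper's own proof: both reduce to the fact that $F$ is generated over $\Q$ by the $u_i, v_i, x_{ij}, y_{ij}$, note that $\GG(f)$ determines $f$ on the $u_i, v_i$ via the induced map $X(F)\to X(F')$, and then use the uniqueness of the $F'$-point on the relevant curve $Y_{u'_{\sigma(i)}u'_{\sigma(j)}}$ or $Y_{u'_{\sigma(i)}+u'_{\sigma(j)}}$ (Lemma~\ref{L:points over F(G)}) to pin down $f$ on the $x_{ij}, y_{ij}$. The only cosmetic difference is that the paper phrases this as a recipe for \emph{recovering} $f$ from $\GG(f)$ (anticipating the effective version in Proposition~\ref{P:recover f from G(f)}), whereas you phrase it as showing two morphisms with equal image under $\GG$ agree on generators.
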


\begin{proof}
We must show how to recover a morphism $f \colon F \to F'$ in $\E$
given $F$, $F'$, and $\GG(f)$.
Without loss of generality, replace $F$ by an isomorphic field
to assume that $F = \FF(G)$, which is generated by elements
$u_i$, $v_i$, $x_{ij}$, $y_{ij}$.
Since $\GG(f)$ is the map $X(F) \to X(F')$ induced by $f$,
we recover $f(u_i)$ and $f(v_i)$ for all $i$.
For each edge $\{i,j\}$ of $G$, 
the homomorphism $f$ maps $(x_{ij},y_{ij}) \in Y_{u_i u_j}(F)$
to a point of $Y_{u_i u_j}(F')$,
but by Lemma~\ref{L:points over F(G)}\eqref{I:points of Y for edges in E}
that point is unique, so we recover $f(x_{ij})$ and $f(y_{ij})$.
Similarly, for each non-edge $\{i,j\}$ of $G$,
Lemma~\ref{L:points over F(G)}\eqref{I:points of Y for edges outside E}
lets us recover $f(x_{ij})$ and $f(y_{ij})$.
Together, these determine $f$.
\end{proof}

\begin{proof}[Proof of Theorem~\ref{T:fully faithful}]
Propositions \ref{P:GF=1} and~\ref{P:G is faithful}
formally imply that $\FF$ is fully faithful.
\end{proof}

\section{Computability}
\label{S:computability}

The specification of $\FF$ in Section~\ref{S:construction of F} 
sufficed for Theorem~\ref{T:fully faithful}.
But for the applications to computable model theory, 
for $G \in \Graphsw$
we need to modify the definition of $\FF(G)$ 
to ensure in particular that its domain is $\omega$ 
and not some other countable set.
To do this, 
we will iteratively build a standard presentation of a function field
starting from a presentation of its constant field.

Let $\pi \colon \omega \times \omega \isomto \omega$
be the standard pairing function.
We will repeatedly use the following: Given a field $k$
with $\dom(k) \subseteq \omega$,
and given an irreducible polynomial $f(x,y) \in k[x,y]$ 
of $y$-degree $d \in \Z_{>0}$,
the function field of the integral curve $f(x,y)=0$ is
\[
	k(x)[y]/(f(x,y)) 
	\isom k(x) \directsum k(x) y \directsum \cdots \directsum k(x) y^{d-1},
\]
and the presentation of $k$ can be extended to a presentation of 
this function field on the disjoint union $\dom(k) \sqcup \omega$.

Let $G=(\omega,E) \in \Graphsw$.
Partition the $\omega$ that is to be $\dom(\FFw(G))$
into countably many infinite columns 
$\omega^{[n]} \colonequals \pi(\{n\} \times \omega)$.
Define field operations on $K_0 \colonequals \omega^{[0]}$
so that $K_0$ is computably isomorphic to $\Q$.
For each $i\in\omega$,
use the elements of $\omega^{[2i+2]}$ to extend the field $K_i$
to $K_{i+1} \colonequals K_i(X) = K_i(u_{i+1},v_{i+1})$
in a uniform way such that $\pi(u_{i+1},v_{i+1}) > \pi(u_i,v_i)$.
Then $K \isom \Union K_i$, 
which has domain $\Union_{n \textup{ even}} \omega^{[n]}$.
Let $F_0 \colonequals K$.
Next, order the pairs $(i,j)$ with $i>j$ in lexicographic order, 
hence in order type $\omega$.  If $(i,j)$ is the $k$th such pair 
(so $k=\frac{i(i-1)}2+j+1$), 
inductively define $F_k$ by adjoining the elements of $\omega^{[2k-1]}$ 
to $F_{k-1}$ in a uniform way 
to form the function field $F_{k-1}(Z_{\{i,j\}})$.
Then define $\FF_\omega(G) \colonequals \Union F_k$, 
which has domain $\Union \omega^{[n]} = \omega$.
The action of $\FF_\omega$ on morphisms is defined as for $\FF$.

Really all we have done is to identify $\dom(\FF(G))$ with $\omega$.
In fact, we might as well modify the definition of $\FF$
so that $\dom(\FF(G)) = \omega$.
Then $\FF$ restricts to $\FFw$.

We record a property of the construction 
that will be useful in the proof of Theorem~\ref{thm:relations}:

\begin{lemma}
\label{L:mu}
Fix $G \in \Graphsw$.
Let $\mu \colon G \to \FFw(G)$ 
be the injection sending $i$ to $u_i$.
Then $\mu$ is computable, 
its range $\mu(G)$ is a computable subset of $\FFw(G) = \omega$,
and $\mu^{-1}$ (defined on $\mu(G)$) is computable.
Moreover, $\mu$ varies functorially with $G$.
\end{lemma}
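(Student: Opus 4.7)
The plan is to exploit the uniform, stage-by-stage construction of $\FFw(G)$ described just before the lemma. The key observation is that the purely transcendental subfield $K = \bigcup_i K_i$, which carries all the elements $u_i$, is built \emph{before} any of the edge-dependent curves $Z_{\{i,j\}}$ are adjoined, and its construction is identical for every $G \in \Graphsw$. Consequently the position of $u_i$ inside $\omega$ depends only on $i$, not on $G$ or its atomic diagram.

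First I would fix an explicit uniform convention for how each extension $K_i \to K_{i+1} = K_i(u,v)$ is laid out on its column $\omega^{[n]}$, for instance reserving $\pi(n,0)$ and $\pi(n,1)$ to be $u$ and $v$ and using the remaining elements of that column for the other basis elements needed to present $K_i(u)[v] \isom K_i(u) \directsum K_i(u)\,v \directsum K_i(u)\,v^2 \directsum \cdots$. Under such a convention $i \mapsto u_i$ is a primitive recursive function $\omega \to \omega$ requiring no oracle on $G$ at all, which gives the computability of $\mu$. Likewise, $n \in \mu(G)$ iff $n$ is the reserved ``$u$-slot'' of its column, a quantifier-free condition on $n$ involving only $\pi$ and its inverse; and reading off the column index recovers $\mu^{-1}(n)$ by the same procedure. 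This handles the three computability claims at once.

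For functoriality, recall from Section~\ref{S:construction of F} that a graph morphism $g \colon G \to G'$ is sent by $\FF$ (hence by $\FFw$) to the field homomorphism induced by reindexing the factors of $\prod_{i \in V} X$ via $g$, so the $i$th copy of $X$ in $G$ is carried to the $g(i)$th copy of $X$ in $G'$. Consequently $\FFw(g)$ sends $u_i \in \FFw(G)$ to $u_{g(i)} \in \FFw(G')$, which is precisely the commutativity $\FFw(g) \circ \mu_G = \mu_{G'} \circ g$ required for functoriality of $\mu$ in $G$.

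The only point that needs care is the bookkeeping: one must pick the ``$u$-slot'' convention once and for all at the outset, so that the procedures computing $\mu$, deciding $\mu(G)$, and computing $\mu^{-1}$ can be described without ever consulting the edge relation of $G$. Because the entire construction of the transcendental base $K$ precedes the edge-dependent stages $F_k$, and those later stages only introduce new elements in columns $\omega^{[2k-1]}$ disjoint from the columns housing the $u_i$, this separation is automatic; no further subtlety arises.
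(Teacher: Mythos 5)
Your proposal is correct and follows essentially the same route as the paper: both arguments rest on the observation that the tower $K_0 \subseteq K_1 \subseteq \cdots$ housing the $u_i$ is built by a uniform procedure independent of the edge relation of $G$, so that $\mu$ is in fact independent of $G$, membership in $\mu(G)$ is decided by locating the column of a given $j$ and comparing with the corresponding $u_i$, and functoriality follows from $\FFw(g)$ sending $u_i$ to $u_{g(i)}$. Your explicit ``$u$-slot'' convention is just a concrete instantiation of the paper's phrase ``in a uniform way,'' so nothing substantive differs.
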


\begin{proof}
The uniformity in the construction of the $K_i$ in $\FFw(G)$
ensures that $\mu$ is computable even when $G$ is not.
(In fact, $\mu$ is independent of $G$.)
Given $j \in \FFw(G)$, we can determine if
$j \in \omega^{[2i+2]}$ for some $i \in \omega$;
if so, then compute $u_i$, and check if $j=u_i$.
This lets us check if $j \in \mu(G)$,
and if so computes $i$ such that $\mu(i)=j$.
\end{proof}

For $F \in \Fieldsw$, the injection 
\[
	X(F) \subseteq F \times F = \omega \times \omega 
        \stackrel{\pi}\To \omega
\]
defines a well-ordering on the set $X(F)$.

\begin{lemma}
\label{L:enumeration of X(F)}
If $F \in \Ew$, then there is an order-preserving bijection
$\delta_F \colon \omega \to X(F)$,
and $\delta_F$ and $\delta_F^{-1}$ are computable uniformly from an $F$-oracle.
\end{lemma}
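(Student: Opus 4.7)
The plan is to exhibit $X(F)$ as an infinite, uniformly $F$-decidable subset of $\omega$ (via the injection through $\pi$), and then let $\delta_F$ be the map enumerating this subset in increasing order. Two things must be verified: that $X(F)$ is infinite, and that membership in $X(F)$ (once transported to $\omega$ via $\pi$) is decidable uniformly from an $F$-oracle.

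For infinitude, I would use the hypothesis $F \in \Ew$. Fixing any isomorphism $F \isom \FFw(G)$ with $G \in \Graphsw$, Lemma~\ref{L:points over F(G)}\eqref{I:points of X} identifies $X(\FFw(G))$ with $V = \omega$ via $i \mapsto (u_i,v_i)$, so $\#X(F) = \aleph_0$. For decidability, given $n \in \omega$ and an $F$-oracle, decode $\pi^{-1}(n) = (a,b) \in F \times F$, evaluate $p(a,b) \in F$ using the field operations of $F$ (all computable from the oracle, since $F \in \Fieldsw$), and test whether the result equals $0_F$. This yields a uniform decision procedure for $\{n \in \omega : \pi^{-1}(n) \in X(F)\}$, uniformly in $F$.

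Combining these two facts, I would define $\delta_F(n)$ as follows: run $m = 0, 1, 2, \ldots$ through $\omega$ and collect those $m$ for which $\pi^{-1}(m) \in X(F)$; let $\delta_F(n)$ be the $(n+1)$st such value, decoded by $\pi^{-1}$. Infinitude guarantees this search always terminates, so $\delta_F$ is total, and it is order-preserving by construction. The inverse $\delta_F^{-1}$ is computed symmetrically: given $(a,b) \in X(F)$, enumerate $X(F)$ in increasing $\pi$-order until $(a,b)$ appears, and return its index.

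The main obstacle, modest as it is, lies in justifying infinitude: decidability would hold for every $F \in \Fieldsw$, but without the hypothesis $F \in \Ew$ the set $X(F)$ could be empty (for instance, $X(\Q) = \emptyset$ by Lemma~\ref{L:properties of the curves}\eqref{I:X(Q)}), which would make $\delta_F$ undefined on most of $\omega$. This is the one place where the essential-image hypothesis is genuinely used.
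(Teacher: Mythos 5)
Your proof is correct and follows essentially the same route as the paper: both establish $\#X(F)=\aleph_0$ by transporting Lemma~\ref{L:points over F(G)}\eqref{I:points of X} along an isomorphism $F\isom\FFw(G)$, and both define $\delta_F$ by enumerating $X(F)$ in the $\pi$-order by a search that is effective relative to an $F$-oracle. You merely spell out the membership test (evaluating $p(a,b)$ via the field operations coded in the atomic diagram) that the paper leaves implicit, and your closing remark correctly identifies where the hypothesis $F\in\Ew$ is used.
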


\begin{proof}
We have $F \isom \FFw(G)$ for some $G \in \Graphsw$.
By Lemma~\ref{L:points over F(G)}\eqref{I:points of X},
$\# X(F) = \# X(\FFw(G)) = \#G = \aleph_0$.
Given $F$, the elements of $X(F)$ can be enumerated by searching in order;
call the $i$th element $\delta_F(i)$ (starting with $i=0$).
This defines $\delta_F$ and shows that
$\delta_F$ and $\delta_F^{-1}$ are computable.
\end{proof}

\begin{lemma}
\label{L:order-preserving}
If $G \in \Graphsw$,
then the bijection $\epsilon_G \colon G \to X(\FFw(G))$ 
is order-preserving, and $\epsilon_G = \delta_{\FFw(G)}$.
\end{lemma}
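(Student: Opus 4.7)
The claim has two parts: that $\epsilon_G$ is order-preserving, and that it equals $\delta_{\FFw(G)}$. The plan is to deduce both from the explicit bookkeeping in the construction of $\FFw(G)$.

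First I would unpack how the elements $u_i,v_i$ sit inside $\FFw(G)=\omega$. The construction builds a chain $K_0\subseteq K_1\subseteq\cdots$ of fields on disjoint columns of $\omega$, where $K_{i+1}$ is obtained from $K_i$ by adjoining a fresh copy of $X$ realized on the column $\omega^{[2i+2]}$, in a \emph{uniform} way that moreover ensures $\pi(u_{i+1},v_{i+1})>\pi(u_i,v_i)$. In particular the sequence $i\mapsto\pi(u_i,v_i)$ is strictly increasing on $\omega$. Since the well-ordering on $X(F)$ is by definition pulled back from the natural order on $\omega$ via $X(F)\subseteq F\times F\stackrel{\pi}{\to}\omega$, this monotonicity is precisely what is required for $\epsilon_G\colon i\mapsto (u_i,v_i)$ to be order-preserving.

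Next I would deduce the identification with $\delta_{\FFw(G)}$. Proposition~\ref{P:GF=1}, together with Lemma~\ref{L:points over F(G)}\eqref{I:points of X}, tells us that $\epsilon_G$ is a bijection from $G=\omega$ onto $X(\FFw(G))$. Combined with the previous step, $\epsilon_G$ is an order-preserving bijection from $\omega$ onto the well-ordered set $X(\FFw(G))$. The latter is an infinite well-ordered subset of $\omega$, hence has order type $\omega$, so there is exactly one order-preserving bijection $\omega\to X(\FFw(G))$; that bijection is by definition $\delta_{\FFw(G)}$, so $\epsilon_G=\delta_{\FFw(G)}$.

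There is no real obstacle here: the whole lemma rests on remembering the uniformity clause $\pi(u_{i+1},v_{i+1})>\pi(u_i,v_i)$ that was inserted into the construction of the $K_i$, together with the elementary uniqueness of order-preserving bijections between well-ordered sets of the same order type. The mild bookkeeping point to be careful about is simply matching the indexing used in Section~\ref{S:construction of F} (one $u_i,v_i$ per vertex $i\in V$) with the stage-by-stage enumeration used in Section~\ref{S:computability}, so that the inequality produced at each stage is applied to the right pair.
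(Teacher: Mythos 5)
Your argument is correct and is essentially the paper's own proof: order-preservation comes from the clause $\pi(u_{i+1},v_{i+1})>\pi(u_i,v_i)$ built into the construction, and equality with $\delta_{\FFw(G)}$ follows because both maps are order-preserving bijections from $\omega$ onto the well-ordered set $X(\FFw(G))$, hence coincide. The only difference is cosmetic — you spell out the uniqueness of order-preserving bijections onto a set of order type $\omega$, which the paper leaves implicit.
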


\begin{proof}
The bijection $\epsilon_G$ 
is order-preserving by the condition $\pi(u_{i+1},v_{i+1}) > \pi(u_i,v_i)$.
Since $\epsilon_G$ and $\delta_{\FFw(G)}$
are both order-preserving bijections $\omega \to X(\FFw(G))$,
they are equal.
\end{proof}

Let us now define the promised ``inverse'' functor 
$\GGw \colon \Ew \to \Graphsw$.
If $F$ is an object of $\Ew$,
then by transport of structure 
across the bijection $\delta_F \colon \omega \to X(F)$,
the graph $\GG(F)$ with vertex set $X(F)$
becomes a graph $\GGw(F)$ with vertex set $\omega$.
If $f \colon F \to F'$ is a morphism of $\Ew$,
then again by transport of structure,
the morphism $\GG(f) \colon \GG(F) \to \GG(F')$
becomes $\GGw(f) \colon \GGw(F) \to \GGw(F')$.

\begin{proposition}
\label{P:GwFw=1}
We have $\GGw \FFw = 1_{\Graphsw}$.  
\end{proposition}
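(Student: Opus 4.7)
The plan is to deduce this from the earlier Proposition~\ref{P:GF=1} (which gives $\GG\FF \isom 1_{\Graphs}$ via the natural isomorphism $\epsilon$) together with Lemma~\ref{L:order-preserving} (which identifies $\epsilon_G$ with $\delta_{\FFw(G)}$). The key observation is that $\GGw$ is defined from $\GG$ by transporting the vertex set from $X(F)$ back to $\omega$ via the bijection $\delta_F$; so to check that $\GGw\FFw$ is literally the identity on $\Graphsw$, we just need to confirm that this transport undoes $\epsilon_G$.

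First I would fix $G \in \Graphsw$ and unwind the definitions. The graph $\GG(\FFw(G))$ has vertex set $X(\FFw(G))$, and $\GGw(\FFw(G))$ is obtained from it by relabeling vertices via $\delta_{\FFw(G)} \colon \omega \to X(\FFw(G))$. By Lemma~\ref{L:order-preserving}, $\delta_{\FFw(G)}$ coincides with $\epsilon_G$. But $\epsilon_G$ is by Proposition~\ref{P:GF=1} a graph isomorphism from $G$ to $\GG(\FFw(G))$. Pulling back the edge relation of $\GG(\FFw(G))$ along $\epsilon_G$ therefore reproduces exactly the edge relation of $G$. Since both $G$ and $\GGw(\FFw(G))$ have the same underlying set $\omega$, this gives the equality of graphs $\GGw(\FFw(G)) = G$.

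Next I would check the statement on morphisms. Given a morphism $f \colon G \to G'$ in $\Graphsw$, the functoriality of $\epsilon$ asserted in the proof of Proposition~\ref{P:GF=1} yields $\GG(\FFw(f)) \circ \epsilon_G = \epsilon_{G'} \circ f$, i.e.\ $\GG(\FFw(f)) = \epsilon_{G'} \circ f \circ \epsilon_G^{-1}$. Transporting through the bijections $\delta_{\FFw(G)} = \epsilon_G$ and $\delta_{\FFw(G')} = \epsilon_{G'}$ to pass from $\GG(\FFw(f))$ to $\GGw(\FFw(f))$ then gives $\GGw(\FFw(f)) = \delta_{\FFw(G')}^{-1} \circ \GG(\FFw(f)) \circ \delta_{\FFw(G)} = f$, as required.

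There is no real obstacle; the proposition is essentially a bookkeeping consequence of the fact that we arranged the presentation of $\FFw(G)$ precisely so that the canonical enumeration $\delta_{\FFw(G)}$ of $X(\FFw(G))$ matches up with $\epsilon_G$. The only subtlety is being careful that the two uses of transport of structure—one implicit in the definition of $\GGw$, the other implicit in the identification $\delta = \epsilon$—cancel on the nose rather than merely up to isomorphism, which is exactly what Lemma~\ref{L:order-preserving} buys us.
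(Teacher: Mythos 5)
Your proof is correct and follows essentially the same route as the paper's: both rest on Lemma~\ref{L:order-preserving} identifying $\epsilon_G$ with $\delta_{\FFw(G)}$, so that the transport of structure defining $\GGw$ exactly cancels $\epsilon_G$ on objects, with functoriality handling morphisms. You simply spell out the bookkeeping (including the morphism check) in more detail than the paper's two-line argument.
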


\begin{proof}
If $G$ is an object of $\Graphsw$, 
the composition of sets
\[
	G \stackrel{\epsilon_G}\To X(\FFw(G)) \stackrel{\delta_{\FFw(G)}^{-1}}\To \GGw(\FFw(G))
\]
is the identity $\omega \to \omega$ by Lemma~\ref{L:order-preserving}.
Each step is functorial in $G$ by construction.
\end{proof}

\begin{proposition}
\label{P:Gw is computable}
The functor $\GGw \colon \Ew \to \Graphs$ is computable.
\end{proposition}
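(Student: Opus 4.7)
The plan is to exhibit Turing functionals $\Phi$ and $\Psi$ realizing conditions~\eqref{I:condition 1} and~\eqref{I:condition 2} of Definition~\ref{defn:computablefunctor} for $\GGw$.

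For condition~\eqref{I:condition 1}, given an oracle for $F \in \Ew$, I must compute the edge relation of $\GGw(F)$, whose vertex set is $\omega$. On input $(i,j)$, first invoke Lemma~\ref{L:enumeration of X(F)} to compute $\delta_F(i) = (u_i,v_i)$ and $\delta_F(j) = (u_j,v_j)$ in $X(F) \subseteq F \times F$, using only the $F$-oracle. The elements $u_iu_j$ and $u_i + u_j$ of $F$ are then computable via the field operations of $F$, which are available from the $F$-oracle since its atomic diagram encodes them. By Lemma~\ref{L:points over F(G)}, exactly one of $Y_{u_iu_j}(F)$ and $Y_{u_i+u_j}(F)$ is nonempty. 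Dovetail a search over pairs $(x,y) \in F \times F$, at each step evaluating the polynomials $q(u_iu_j,x,y)$ and $q(u_i+u_j,x,y)$ in $F$ and checking whether either vanishes. Because exactly one of the two curves has an $F$-point, the search halts, and the outcome tells us whether $\{i,j\}$ is an edge of $\GGw(F)$: it is, precisely when $Y_{u_iu_j}$ is the curve that yielded a witness.

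For condition~\eqref{I:condition 2}, given a morphism $f \colon F \to F'$ in $\Ew$ together with the structures $F$ and $F'$, I must compute $\GGw(f) \colon \omega \to \omega$. On input $i$, use the $F$-oracle to compute $\delta_F(i) = (u_i,v_i) \in X(F)$, then evaluate $f$ at $u_i$ and $v_i$ using the $f$-oracle to obtain $(f(u_i),f(v_i))$; this pair lies in $X(F')$ because $X$ is cut out by a polynomial over $\Q$ and $f$ is a field homomorphism. Finally, use the $F'$-oracle to compute $\delta_{F'}^{-1}(f(u_i),f(v_i))$ by enumerating $X(F')$ until the target point appears. This yields $\GGw(f)(i) \in \omega$ by the definition of $\GGw$'s action on morphisms via transport of structure along $\delta_F$ and $\delta_{F'}$.

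The only nontrivial point is the apparent lack of a uniform procedure for deciding whether a given curve $Y_t$ has an $F$-point, which \emph{a priori} is only a $\Sigma^0_1$ question given $F$. The dichotomy supplied by Lemma~\ref{L:points over F(G)} is exactly what makes the object procedure effective: since for each pair $\{i,j\}$ one of the two candidate curves is guaranteed to have an $F$-point, the parallel search must terminate, so the combination of two semidecidable questions becomes decidable. Everything else is uniform in the inputs, so both functionals meet the requirements of Definition~\ref{defn:computablefunctor}, and $\GGw$ is a computable functor.
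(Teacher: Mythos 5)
Your proposal is correct and follows essentially the same route as the paper's proof: compute $\delta_F(i)$ and $\delta_F(j)$ via Lemma~\ref{L:enumeration of X(F)}, use the dichotomy of Lemma~\ref{L:points over F(G)} to make the parallel search for a point on $Y_{u_iu_j}$ or $Y_{u_i+u_j}$ terminate, and compute $\GGw(f)$ as the composition $\delta_{F'}^{-1}\circ f\circ\delta_F$. The only difference is that you spell out the dovetailing and the evaluation of $q$ in $F$ explicitly, which the paper leaves implicit.
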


\begin{proof}
Given $F \in \Ew$, the construction of $\GGw(F)$ is effective,
as we now explain.
To compute whether a given pair $\{i,j\}$ is an edge of $\GGw(F)$,
first use Lemma~\ref{L:enumeration of X(F)} 
to compute $(u_i,v_i) \colonequals \delta_F(i)$
and $(u_j,v_j) \colonequals \delta_F(j)$.
By Lemma~\ref{L:points over F(G)}(\ref{I:points of Y for edges in E},\ref{I:points of Y for edges outside E}),
exactly one of $Y_{u_i u_j}$ and $Y_{u_i+u_j}$ has an $F$-point.
To find out which, search both curves in parallel.
When a point is found, which curve it is on tells us
whether $\{i,j\}$ is an edge.

Given fields $F,F'$ and a morphism $f \colon F \to F'$ in $\Ew$,
the morphism $\GGw(f)$ is the composition
\[
	\omega \stackrel{\delta_F}\To X(F) \stackrel{f}\To X(F') \stackrel{\delta_{F'}^{-1}}\To \omega,
\]
which is computable by Lemma~\ref{L:enumeration of X(F)}.
\end{proof}

Next is an effective version of Proposition~\ref{P:G is faithful}.

\begin{proposition}
\label{P:recover f from G(f)}
There exists a Turing functional that 
given $F,F' \in \E$ and $g \colon \GGw(F) \to \GGw(F')$
computes the unique morphism $f \colon F \to F'$ of $\Ew$
such that $\GGw(f) = g$.
\end{proposition}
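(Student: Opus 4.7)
The plan is to effectivize the argument of Proposition~\ref{P:G is faithful}: first identify a uniformly computable generating set for $F$ over $\Q$, then read off the action of $f$ on those generators from $g$, and finally extend to all of $F$ by a polynomial-expression search.

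To enumerate generators, for each $i \in \omega$ compute $(u_i, v_i) \colonequals \delta_F(i) \in X(F)$ using Lemma~\ref{L:enumeration of X(F)}. For each pair $\{i,j\}$ with $i > j$, search inside $F$ in parallel on the curves $Y_{u_i u_j}$ and $Y_{u_i + u_j}$ until finding the unique $F$-point of whichever of the two carries one; this existence and uniqueness are inherited from Lemma~\ref{L:points over F(G)} via any abstract isomorphism $F \isom \FFw(\GGw(F))$, which is available because $F \in \Ew$. Call the coordinates of that point $(x_{ij}, y_{ij})$, and record which curve they lie on. Do the same computation inside $F'$, producing corresponding elements $(u'_i, v'_i)$ and $(x'_{ij}, y'_{ij})$.

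Next, read off $f$ on these generators from $g$. Unwinding the definition of $\GGw$ on morphisms gives $f(\delta_F(i)) = \delta_{F'}(g(i))$, so $f(u_i) = u'_{g(i)}$ and $f(v_i) = v'_{g(i)}$. For each pair $\{i,j\}$, the point $(x_{ij}, y_{ij})$ lies on, say, $Y_{u_i u_j}$, and $f$ carries it to an $F'$-point of the base change $Y_{u'_{g(i)} u'_{g(j)}}$; because $g$ is a graph morphism, this is precisely the curve recorded on the $F'$ side for the pair $\{g(i), g(j)\}$, so by Lemma~\ref{L:points over F(G)} the target point is unique and we must have $f(x_{ij}) = x'_{g(i)g(j)}$ and $f(y_{ij}) = y'_{g(i)g(j)}$; non-edges are treated symmetrically through $Y_{u_i + u_j}$.

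Finally, extend $f$ to arbitrary $a \in F$. Because $F$ is isomorphic to $\FFw(\GGw(F))$, which by the construction in Section~\ref{S:construction of F} is generated as a field over $\Q$ by the analogues of the $u_i, v_i, x_{ij}, y_{ij}$, the same generation property holds for $F$. Hence one may enumerate rational polynomial expressions in finitely many of the enumerated generators and test each against $a$ using $F$'s atomic diagram until one succeeds; then $f(a)$ is the value of that same polynomial on the corresponding $f$-images inside $F'$. The output is independent of which expression was found, because the abstract morphism $f$ guaranteed by fullness of $\FF$ (Theorem~\ref{T:fully faithful}) is a homomorphism and assigns a single value to $a$. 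The main obstacle is justifying uniform termination of this polynomial search, which reduces to transporting the generation property across an abstract isomorphism $F \isom \FFw(\GGw(F))$; once this is in place, the remainder is routine Turing-functional bookkeeping, and faithfulness of $\GGw$ (Proposition~\ref{P:G is faithful}) confirms that the computed $f$ is the unique morphism with $\GGw(f) = g$.
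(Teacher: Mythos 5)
Your proposal is correct and follows essentially the same route as the paper's proof: compute $f$ on the $(u_i,v_i)$ via $\delta_{F'}\circ g\circ\delta_F^{-1}$, find the unique $F'$-point of the appropriate $Y$-curve by search to get $f(x_{ij}),f(y_{ij})$, and extend to all of $F$ by searching for a rational expression in the generators. The only difference is that you explicitly justify termination of the final search (generation transfers across the abstract isomorphism $F\isom\FFw(\GGw(F))$), a point the paper leaves implicit; this is a correct and worthwhile clarification, not a change of method.
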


\begin{proof}
Existence and uniqueness of $f$ follow from
Theorem~\ref{T:fully faithful}.
Define $u_i,v_i \in F$ by $(u_i,v_i)=\delta_F(i) \in X(F)$.
Since $\GGw(f)=g$, the map $X(F) \to X(F')$ induced by $f$ is 
the composition
\[
	X(F) \stackrel{\delta_F^{-1}}\To \GGw(F) 
	\stackrel{g}\To \GGw(F') \stackrel{\delta_{F'}}\To X(F').
\]
Thus we may compute $f(u_i)$ and $f(v_i)$ for any given $i \in \omega$.
For each $\{i,j\}$, let $(x_{ij},y_{ij})$ 
be the point of $Y_{u_i u_j}(F)$ or $Y_{u_i+u_j}(F)$,
according to whether $\{i,j\}$ is an edge of $\GGw(F)$ or not.
Then $f$ maps $(x_{ij},y_{ij})$ to the unique point of
$Y_{f(u_i) f(u_j)}(F')$ or $Y_{f(u_i)+f(u_j)}(F')$,
and that point can be found by a search,
so we can compute $f(x_{ij})$ and $f(y_{ij})$.
Finally, given any $z \in F$,
search for $u$'s, $v$'s, $x$'s, $y$'s as above and a rational function
expressing $z$ in terms of them;
evaluate the same rational function on their images under $f$
to compute $f(z)$ in $F'$.
\end{proof}

\begin{proposition}
\label{P:Fw is computable}
The functor $\FFw \colon \Graphsw \to \Fieldsw$ is computable.
\end{proposition}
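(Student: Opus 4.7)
The plan is to verify both conditions of Definition~\ref{defn:computablefunctor} for $\FFw$, reducing the morphism case to Proposition~\ref{P:recover f from G(f)}.

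For condition~\eqref{I:condition 1}, the construction of $\FFw(G)$ at the start of this section is already engineered to be uniformly effective given a $G$-oracle. The partition of $\omega$ into columns $\omega^{[n]}$ is computable, and the presentation of $K_0 \isom \Q$ on $\omega^{[0]}$ is fixed in advance. Each successive extension $K_i \subseteq K_{i+1} = K_i(u_{i+1},v_{i+1})$ is built by a fixed, $G$-independent procedure that lays down the standard $K_i(x)$-basis $1,y,\dots,y^{d-1}$ on $\omega^{[2i+2]}$, using the defining polynomial of $X$. The graph enters \emph{only} when enlarging $F_{k-1}$ to $F_k = F_{k-1}(Z_{\{i,j\}})$: a single oracle query to the edge relation decides whether $Z_{\{i,j\}}$ is $Y_{u_i u_j}$ or $Y_{u_i+u_j}$, after which the function-field extension is again built uniformly on $\omega^{[2k-1]}$ from the standard basis for a polynomial of fixed $y$-degree. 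Field operations and equality on $\FFw(G)$ are then computed by tracing any given element back through the finitely many extension steps that produced it. This yields a Turing functional $\Phi$ such that $\Phi^G$ computes the atomic diagram of $\FFw(G)$.

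For condition~\eqref{I:condition 2}, suppose we are given oracles for $G,G' \in \Graphsw$ together with a morphism $g \colon G \to G'$. First apply the functional $\Phi$ twice to compute the atomic diagrams of $F \colonequals \FFw(G)$ and $F' \colonequals \FFw(G')$. By Proposition~\ref{P:GwFw=1} we have $\GGw(F) = G$ and $\GGw(F') = G'$ as structures on $\omega$, so $g$ is literally a morphism $\GGw(F) \to \GGw(F')$ in $\Graphsw$. Now apply the Turing functional furnished by Proposition~\ref{P:recover f from G(f)} to the input $F \oplus g \oplus F'$ to obtain the unique morphism $f \colon F \to F'$ in $\Ew$ satisfying $\GGw(f) = g$. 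Since $\GGw(\FFw(g)) = g$ by the identity $\GGw \FFw = 1_{\Graphsw}$, uniqueness forces $f = \FFw(g)$, as required.

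There is no single main obstacle; the substance lies in the explicit construction of $\FFw$ and in Proposition~\ref{P:recover f from G(f)}, and this proposition is essentially a bookkeeping assembly of those pieces. The conceptual point worth emphasizing is that Definition~\ref{defn:computablefunctor}\eqref{I:condition 2} supplies the source and target structures as oracles alongside the morphism, which is precisely what permits the bootstrap from $G,G'$ to $F,F'$ needed to invoke Proposition~\ref{P:recover f from G(f)}; this is exactly the sort of situation alluded to in the remarks following Definition~\ref{defn:computablefunctor}.
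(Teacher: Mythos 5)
Your proof is correct and follows essentially the same route as the paper: uniformity of the column-by-column construction gives the functional $\Phi$ for objects, and for morphisms one computes $F=\FFw(G)$ and $F'=\FFw(G')$, observes via Proposition~\ref{P:GwFw=1} that $g=\GGw(\FFw(g))$, and invokes Proposition~\ref{P:recover f from G(f)} to recover $\FFw(g)$. The only difference is that you spell out the uniformity argument and the uniqueness step in more detail than the paper does.
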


\begin{proof}
The constructions of the fields $K_i$ and $F_i$ in Section~\ref{S:construction}
are done by a uniform process, 
so $\FFw(G)$ has been defined uniformly effectively below a $\Delta(G)$-oracle
(specifically, below the set $E$ of edges in $G$).
This provides the Turing functional $\Phi$ 
in Definition~\ref{defn:computablefunctor}) for $\FFw$.

Now suppose that we are given graphs $G$ and $G'$
(or rather, their atomic diagrams)
and a morphism $g \colon G \to G'$.
By the previous paragraph,
$\FFw(G)$ and $\FFw(G')$ are computable from these.
Also, $\GGw(\FFw(g))$ is computable, 
since by Proposition~\ref{P:GwFw=1} it equals $g$.
By Proposition~\ref{P:recover f from G(f)},
we can compute $\FFw(g)$.
\end{proof}

\begin{proposition}
\label{P:FwGw}
The composition $\FFw \GGw$ is computably isomorphic to $1_{\Ew}$.
\end{proposition}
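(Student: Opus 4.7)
The plan is to construct, for each $F \in \Ew$, an isomorphism $\tau_F \colon \FFw(\GGw(F)) \to F$ and verify that $F \mapsto \tau_F$ is a computable natural transformation whose inverse is also computable. Set $G \colonequals \GGw(F)$. Then $\FFw(G) \in \Ew$, and by Proposition~\ref{P:GwFw=1} we have $\GGw(\FFw(G)) = G = \GGw(F)$, so the identity $1_G$ may be viewed as a morphism $\GGw(\FFw(G)) \to \GGw(F)$ in $\Graphsw$. I take $\tau_F$ to be the unique morphism $\FFw(G) \to F$ in $\Ew$ satisfying $\GGw(\tau_F) = 1_G$; its existence, uniqueness, and computability from the inputs $\FFw(G)$, $F$, and $1_G$ are all delivered by Proposition~\ref{P:recover f from G(f)}.

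To see that $\tau_F$ is an isomorphism, I would apply Proposition~\ref{P:recover f from G(f)} in the opposite direction to obtain a morphism $\sigma_F \colon F \to \FFw(G)$ with $\GGw(\sigma_F) = 1_G$. Then $\GGw(\tau_F \circ \sigma_F) = 1_G = \GGw(1_F)$ and $\GGw(\sigma_F \circ \tau_F) = 1_G = \GGw(1_{\FFw(G)})$, so the uniqueness clause forces $\tau_F \circ \sigma_F = 1_F$ and $\sigma_F \circ \tau_F = 1_{\FFw(G)}$. For naturality, given a morphism $f \colon F \to F'$ in $\Ew$, both $f \circ \tau_F$ and $\tau_{F'} \circ \FFw(\GGw(f))$ are morphisms $\FFw(\GGw(F)) \to F'$ in $\Ew$ whose image under $\GGw$ equals $\GGw(f)$; uniqueness again forces them to coincide, giving the required commuting square.

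Finally, for computability: given an oracle for $F$, we compute $G = \GGw(F)$ by Proposition~\ref{P:Gw is computable}, then $\FFw(G)$ by Proposition~\ref{P:Fw is computable}, and feed the triple $(\FFw(G), F, 1_G)$ into the Turing functional of Proposition~\ref{P:recover f from G(f)} to obtain $\tau_F$. The inverse $\tau_F^{-1} = \sigma_F$ is produced by the same method with the roles of source and target swapped. All of these reductions are uniform in $F$, yielding the desired computable isomorphism of functors. There is no deep obstacle here once Propositions~\ref{P:GwFw=1}, \ref{P:Gw is computable}, \ref{P:Fw is computable}, and~\ref{P:recover f from G(f)} are in hand; the one point worth emphasizing is that every required property of $\tau$ — invertibility, the formula for the inverse, and naturality — is extracted from the single uniqueness-of-lift statement built into Proposition~\ref{P:recover f from G(f)}.
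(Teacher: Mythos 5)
Your proof is correct and follows essentially the same route as the paper's: apply Proposition~\ref{P:GwFw=1} to view the identity on $\GGw(F)$ as a morphism in each direction, and then lift through Proposition~\ref{P:recover f from G(f)} to obtain the computable isomorphism and its inverse. You simply spell out the invertibility and naturality checks (via the uniqueness clause and the identity $\GGw\FFw = 1_{\Graphsw}$ on morphisms) that the paper leaves implicit.
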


\begin{proof}
For each $F \in \Ew$, Proposition~\ref{P:GwFw=1} yields
$\GGw(\FFw(\GGw(F))) = \GGw(F)$.
Applying Proposition~\ref{P:recover f from G(f)} to the equality in
each direction
yields an isomorphism $\FFw(\GGw(F)) \to F$ and its inverse,
and shows that they are computable from $F$.
Moreover, the construction is functorial in $F$.
\end{proof}

\begin{proof}[Proof of Theorem~\ref{T:computable functors}]
Combine Propositions \ref{P:Fw is computable}, \ref{P:Gw is computable}, 
\ref{P:GwFw=1}, and~\ref{P:FwGw}.
\end{proof}

\begin{proof}[Proof of Proposition~\ref{prop:extrafields}]
\hfill
\begin{enumerate}[\upshape (a)]
\item 
Theorem~\ref{T:computable functors}\eqref{I:Fw and Gw are inverse} 
shows that $\FFw \colon \Fieldsw \to \Graphsw$ is fully faithful;
from this, the result follows formally.
\item 
By Corollary~\ref{C:G and f are computable},
there exists $G \in \II$ with $G \le_T F$.

First suppose that $G$ is automorphically trivial.
Then so is every other $G' \in \II$.
For any automorphically trivial $G'$,
the presence of $\{i,j\}$ as an edge in $G'$
is determined by the answers to the questions
of the form ``Is $i=m$?'' or ``Is $j=m$?'' 
for $m$ in some finite set,
so $G'$ is computable.

Now suppose instead that $G$ is not automorphically trivial.
Trivially, $\Spec G$ contains $\deg G$.
A theorem of Knight, namely \cite{K86}*{Theorem~4.1},
states that if a structure is not
automorphically trivial, then its spectrum is upwards closed.
Thus $\Spec G$ contains $\deg F$ too.
In other words, there exists $G' \in \II$ with $\deg G' = \deg F$.\qedhere
\end{enumerate}
\end{proof}

\section{Ordered fields}
\label{S:ordered fields}

\begin{proposition}
\label{P:F(G) in R}
For each $G \in \Graphsw$, 
the field $\FFw(G)$ is isomorphic
to a subfield of $\R$.
Moreover, the field homomorphism $\FFw(G) \injects \R$ may be chosen 
so that the induced ordering on $\FFw(G)$ is computable from $G$.
\end{proposition}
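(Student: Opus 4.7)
The plan is to mirror the tower construction of $\FFw(G)$ from Section~\ref{S:computability} by an analogous tower of real embeddings, using Lemma~\ref{L:properties of the curves}(\ref{I:X has real points},\ref{I:20 to infinity}) to find a suitable real point at each stage and the edge data of $G$ to decide which curve to use. Fix once and for all (independently of $G$) a computable sequence of real numbers $\tau_1,\tau_2,\ldots\in[20,\infty)$ and $\xi_1,\xi_2,\ldots\in[1,2]$ whose concatenation is algebraically independent over $\Q$; a standard diagonalization (or a Liouville-type construction) yields such a computable sequence. For each $i$, use Lemma~\ref{L:properties of the curves}(\ref{I:X has real points}) to fix a computable real $\sigma_i$ with $p(\tau_i,\sigma_i)=0$, e.g.\ the least real root of $p(\tau_i,v)$, isolated by a Sturm-style algorithm applied to a polynomial with computable-real coefficients.

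Now build the embedding $\FFw(G)\injects\R$ by following the tower: send $u_i\mapsto\tau_i$ and $v_i\mapsto\sigma_i$, so that algebraic independence of the $\tau_i$ yields an embedding $K\injects\R$. Then process the pairs $\{i,j\}$ in the same order used to define $\FFw(G)$: at stage $k$ (with pair $\{i,j\}$ and a partial embedding $F_{k-1}\injects\R$ already in hand), set $t^\star_{ij}\colonequals\tau_i\tau_j$ if $\{i,j\}$ is an edge of $G$ and $t^\star_{ij}\colonequals\tau_i+\tau_j$ otherwise. Since $\tau_i,\tau_j\ge 20$, in either case $t^\star_{ij}\ge 20$, so by Lemma~\ref{L:properties of the curves}(\ref{I:20 to infinity}) there is a real $\eta_k$ (say, the least one) with $q(t^\star_{ij},\xi_k,\eta_k)=0$. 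Sending $x_{ij}\mapsto\xi_k$ and $y_{ij}\mapsto\eta_k$ extends the embedding to $F_k\injects\R$: transcendence of $\xi_k$ over the image $L_{k-1}$ of $F_{k-1}$ (from algebraic independence of the fixed sequence) gives an isomorphism $F_{k-1}(x_{ij})\isomto L_{k-1}(\xi_k)$ that carries the irreducible polynomial $q(t_{ij},x_{ij},y)$ to the irreducible polynomial $q(t^\star_{ij},\xi_k,y)$, whose residue field embeds into $\R$ via $y\mapsto\eta_k$. Taking the direct limit over $k$ yields the desired embedding $\FFw(G)\injects\R$.

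For computability of the induced ordering from $G$: the construction above is uniformly effective given a $G$-oracle, the only $G$-dependent choice being whether to take $\tau_i\tau_j$ or $\tau_i+\tau_j$ at each stage. Tracing any $n\in\omega=\dom(\FFw(G))$ back through the tower expresses it as a rational function in finitely many of the $\tau_i,\sigma_i,\xi_k,\eta_k$, each of which is a computable real uniformly in $G$; evaluating this rational function yields a $G$-computable fast-converging Cauchy approximation to the image of $n$ in $\R$. To decide $a<b$ for $a,b\in\omega$, first decide equality $a=b$ in $\FFw(G)$ from $G$ (possible by Proposition~\ref{P:Fw is computable}); if $a\ne b$, the real images are distinct, so comparing approximations to sufficient precision determines the sign of their difference in finitely many steps. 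The main obstacle is producing the algebraically independent computable sequence $(\tau_i)\cup(\xi_k)$ explicitly; once that foundation is in place, the rest is routine computable-real machinery.
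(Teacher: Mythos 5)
Your proposal is correct and follows essentially the same route as the paper: build the real embedding in lockstep with the tower defining $\FFw(G)$, using Lemma~\ref{L:properties of the curves}(\ref{I:X has real points},\ref{I:20 to infinity}) to supply a real point at each stage whose transcendental coordinate is drawn from a fixed algebraically independent family of computable reals, and then decide the ordering by numerical approximation. The only real difference is that the paper obtains that algebraically independent family from the Lindemann--Weierstrass theorem (the numbers $\exp(2^{1/i})$) rather than by a diagonalization/Liouville construction, which cleanly disposes of the one step you flagged as the main remaining obstacle.
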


\begin{proof}
Given a subfield $k \subseteq \R$, an integral curve $Z$ over $k$,
and a point $z \in Z(\R)$ 
having at least one coordinate transcendental over $k$,
we obtain a $k$-embedding $k(Z) \injects \R$
by sending $f$ to $f(z)$.
We will use this observation inductively to show that all
the fields $K_i$ and $F_i$ in Section~\ref{S:construction} embed into $\R$.

First, $K_0 \colonequals \Q$ is a subfield of $\R$.
By the Lindemann--Weierstrass theorem, 
the numbers $e_i \colonequals \exp(2^{1/i})$ for $i \in \{2,3,\ldots\}$
are algebraically independent over $\Q$.
By Lemma~\ref{L:properties of the curves}\eqref{I:X has real points},
$X$ has a real point with $u$-coordinate $10 e_{2j+2}$;
choose the one with least $v$-coordinate.
This lets us inductively extend $K_{j-1} \injects \R$ 
to $K_j = K_{j-1}(X) \injects \R$ 
so that $u_j$ maps to $10 e_{2j+2}$.
Taking the union embeds $F_0 = K$ in $\R$.
For the $k$th pair $i>j$,
we have $u_i u_j, u_i+u_j \in [20,\infty)$ and $e_{2k+3} \in [1,2]$,
so by Lemma~\ref{L:properties of the curves}\eqref{I:20 to infinity},
the curve $Z_{\{i,j\}}$ has a real point with $x$-coordinate $e_{2k+3}$;
choose the one with least $y$-coordinate.
Thus for each $k \ge 1$ we may extend the embedding $F_{k-1} \injects \R$
to $F_k \injects \R$.
Taking the union yields an embedding $\FFw(G) \injects \R$.

Given two distinct elements of $\FFw(G)$, 
expressed in terms of the generators of $\FFw(G)$,
we may compute them numerically until we determine which is greater.
\end{proof}

Unfortunately, Proposition~\ref{P:F(G) in R}
implies neither that $\FFw$ is a functor to the category
of ordered fields (with order-preserving field homomorphisms), 
nor that our completeness results hold for ordered fields.
The problem is that isomorphisms in the category of ordered fields 
are much more restricted than isomorphisms in the category of fields.
In fact, we have the following.

\begin{proposition}
There is no faithful functor from $\Graphsw$ 
to the category of ordered fields,
or even to the category of ordered sets.
\end{proposition}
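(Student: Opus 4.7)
The plan is to derive a contradiction by locating a graph $G \in \Graphsw$ whose automorphism group has torsion, and showing on general grounds that the automorphism group of any totally ordered set (and hence of any ordered field) is torsion-free. Any faithful functor $F$ from $\Graphsw$ to either target category would, by functoriality and faithfulness on $\Hom(G,G)$, induce an injective group homomorphism $\Aut(G) \hookrightarrow \Aut(F(G))$, and the torsion witness in the source would have no room to land in the torsion-free target.

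First I would verify the torsion-freeness of $\Aut(S, \leq)$ for a totally ordered set~$S$. If $\phi$ is an order-preserving bijection of~$S$ that moves some point~$x$, then without loss of generality $\phi(x) > x$; applying $\phi$ repeatedly and using monotonicity yields the strictly increasing chain $x < \phi(x) < \phi^2(x) < \cdots$, so no positive power of $\phi$ can fix~$x$, and $\phi$ has infinite order. For ordered fields (with order-preserving field homomorphisms) the same conclusion follows since any such automorphism is in particular an order-preserving bijection of the underlying totally ordered set.

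Next I would take $G \in \Graphsw$ to be the graph on~$\omega$ whose only edge is $\{0,1\}$. The transposition $\sigma$ swapping $0$ and~$1$ while fixing every other vertex is a graph automorphism of order~$2$. If $F$ were a faithful functor to the category of totally ordered sets, functoriality would give $F(\sigma)^2 = 1_{F(G)}$, while faithfulness on $\Hom(G,G)$ combined with $\sigma \neq 1_G$ would give $F(\sigma) \neq 1_{F(G)}$. This exhibits a nontrivial element of order dividing~$2$ in $\Aut(F(G))$, contradicting the torsion-freeness established above.

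The ordered-field case reduces to the ordered-set case via the forgetful functor, which is faithful because it is the identity on underlying set maps. I do not anticipate any serious obstacle; the only subtle point is interpreting ``ordered set'' as \emph{totally} ordered set, since for partially ordered sets one could equip each graph with the discrete order on its vertex set and obtain a trivially faithful functor. The juxtaposition with ``ordered fields'' in the statement makes the intended reading clear.
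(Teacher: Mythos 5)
Your proposal is correct and follows essentially the same route as the paper: both arguments rest on the observation that a nontrivial order-preserving bijection of a totally ordered set must have infinite order (you prove this via the strictly increasing orbit chain, the paper via triviality of automorphisms of finite totally ordered sets acting on each orbit), combined with the existence of a graph in $\Graphsw$ admitting a nontrivial finite-order automorphism. Your concrete single-edge example and the reduction of ordered fields to ordered sets are both fine.
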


\begin{proof}
Every (order-preserving) automorphism of a finite totally ordered set is trivial.
Therefore every finite-order automorphism of a totally ordered set
acts trivially on every orbit, and hence is trivial.
On the other hand, 
there exist countable graphs with nontrivial finite-order automorphisms.
\end{proof}

\section{Consequences in computable model theory}
\label{sec:consequences}

Many of the results below rely on \cite{HKSS}*{Theorem~3.1}, which shows that 
for every countable structure $\calA$ that is not automorphically trivial,
there exists a graph $G \in \Graphsw$ with the same spectrum 
and the same $\bfd$-computable dimension for each Turing degree $\bfd$,
and with the property that for every relation $R$ on $\calA$, 
there exists a relation on $G$ 
with the same degree spectrum as $R$ on $\calA$.

\subsection{Turing degree spectrum}
\label{S:Turing degree spectrum}

Recall from Section~\ref{S:notation computable model theory} 
the definition of the spectrum: 
\[
	\Spec M \colonequals \{\deg N: N \isom M \textup{ and } \dom(N) = \omega\}.
\]
\begin{proposition}
\label{P:Turing degrees}
\hfill
\begin{enumerate}[\upshape (a)]
\item \label{I:same Turing degree}
If $G \in \Graphsw$, then $G \equiv_T \FFw(G)$.
\item \label{I:same Turing degree for morphisms}
If $g \colon G \to G'$ is a morphism between computable graphs in $\Graphsw$,
then $g \equiv_T \FFw(g)$.
\end{enumerate}
\end{proposition}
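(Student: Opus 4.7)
My plan is to derive both parts directly from the computable-functoriality results already established, essentially by unpacking Definition~\ref{defn:computablefunctor} and tracking Turing reducibility explicitly; no new ideas are needed beyond what is in Section~\ref{S:computability}.

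For part (a), I would obtain $\FFw(G) \le_T G$ from Proposition~\ref{P:Fw is computable}: the Turing functional $\Phi$ witnessing condition~\eqref{I:condition 1} of Definition~\ref{defn:computablefunctor} for $\FFw$ computes the atomic diagram of $\FFw(G)$ under a $G$-oracle. For the reverse inequality $G \le_T \FFw(G)$, I would apply Proposition~\ref{P:Gw is computable} to compute $\GGw(\FFw(G))$ from an $\FFw(G)$-oracle, and then invoke the equality $\GGw \FFw = 1_{\Graphsw}$ of Proposition~\ref{P:GwFw=1} to conclude $\GGw(\FFw(G)) = G$.

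For part (b), the key observation is that if $G$ and $G'$ are computable, then so are $\FFw(G)$ and $\FFw(G')$ by part (a). Condition~\eqref{I:condition 2} applied to $\FFw$ then gives a Turing functional $\Psi$ with $\Psi^{G \oplus g \oplus G'} = \FFw(g)$; since $G$ and $G'$ are computable, the oracle $G \oplus g \oplus G'$ has the same Turing degree as $g$, so $\FFw(g) \le_T g$. Conversely, the analogous functional for $\GGw$ computes $\GGw(\FFw(g))$ from $\FFw(G) \oplus \FFw(g) \oplus \FFw(G')$, which has the same Turing degree as $\FFw(g)$ by computability of $\FFw(G)$ and $\FFw(G')$; and $\GGw(\FFw(g)) = g$ by Proposition~\ref{P:GwFw=1} applied to the morphism $g$.

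I anticipate no substantive obstacle: the proposition is essentially a translation of Theorem~\ref{T:computable functors} into the language of relative Turing degrees. The only mild subtlety is verifying that the auxiliary source/target oracles permitted by Definition~\ref{defn:computablefunctor}\eqref{I:condition 2} do not inflate the Turing degree, and this is handled in part (a) trivially (no morphism is present) and in part (b) by the computability hypothesis on $G$ and~$G'$.
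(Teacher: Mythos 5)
Your proposal is correct and follows exactly the paper's intended argument: the paper's own proof is the one-line observation that $\FFw$ and $\GGw$ are computable functors (Theorem~\ref{T:computable functors}), and your write-up is simply the careful unpacking of that statement, including the correct use of the computability of $G$ and $G'$ to absorb the auxiliary source/target oracles in Definition~\ref{defn:computablefunctor}\eqref{I:condition 2}. No gaps.
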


\begin{proof}
Theorem~\ref{T:computable functors} showed that $\FFw$ and $\GGw$
are computable functors.
\end{proof}

\begin{theorem}
\label{T:spectra}
Let $G \in \Graphsw$.
\begin{enumerate}[\upshape (a)]
\item \label{I:G is automorphically trivial}
If $G$ is automorphically trivial, then $\Spec G = \{\boldsymbol{0}\}$, 
and $\Spec \FFw(G)$ contains all Turing degrees.
\item \label{I:G is not automorphically trivial}
If $G$ is not automorphically trivial, then $\Spec G = \Spec \FFw(G)$.
\end{enumerate}
\end{theorem}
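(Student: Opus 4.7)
The plan is to treat parts~\eqref{I:G is automorphically trivial} and~\eqref{I:G is not automorphically trivial} separately, combining the computability of $\FFw$ and $\GGw$ established in Section~\ref{S:computability} with Knight's theorem \cite{K86}*{Theorem~4.1}, which asserts that any structure that is not automorphically trivial has upwards-closed spectrum.

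For~\eqref{I:G is automorphically trivial}, the equality $\Spec G = \{\boldsymbol{0}\}$ is a standard consequence of automorphic triviality: outside the finite witnessing set $S_0$, the edge relation is completely uniform, so $G$ and every isomorphic copy with domain $\omega$ is computable. For the second claim, Proposition~\ref{P:Turing degrees}\eqref{I:same Turing degree} gives $\FFw(G) \equiv_T G$, placing $\boldsymbol{0}$ in $\Spec \FFw(G)$. Moreover, $\FFw(G)$ is a nontrivial field---at worst $\Q$ itself, otherwise of positive transcendence degree over $\Q$---and no such field can be automorphically trivial: given any finite $S_0 \subseteq \FFw(G)$, one can find $a,b \in \FFw(G)\setminus S_0$ with $a^2\ne b^2$, and the transposition exchanging $a$ and $b$ fails to preserve multiplication. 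Knight's theorem then forces $\Spec \FFw(G)$ to be upwards closed, so it contains every Turing degree.

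For~\eqref{I:G is not automorphically trivial}, the inclusion $\Spec G \subseteq \Spec \FFw(G)$ is immediate from functoriality: for any $H \in \Graphsw$ with $H \isom G$, functoriality gives $\FFw(H) \isom \FFw(G)$, and Proposition~\ref{P:Turing degrees}\eqref{I:same Turing degree} gives $\deg H = \deg \FFw(H)$, so $\deg H \in \Spec \FFw(G)$. The reverse inclusion is exactly the content of Proposition~\ref{prop:extrafields}\eqref{I:dichotomy} applied to the isomorphism class $\II$ of $G$: since $G$ is not automorphically trivial, neither is any graph in $\II$, so we land in the second case of the dichotomy, producing for each $F \isom \FFw(G)$ with $\dom(F) = \omega$ a graph $G' \in \Graphsw$ with $G' \isom G$ and $\deg G' = \deg F$, hence $\deg F \in \Spec G$.

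The main conceptual obstacle---shared between the two parts---is the observation that the functor $\FFw$ never produces an automorphically trivial field; once this is in hand, both parts reduce to bookkeeping on top of Knight's theorem and the computability results from Section~\ref{S:computability}.
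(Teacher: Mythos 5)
Your proof is correct and follows essentially the same route as the paper: automorphic triviality of graphs gives $\Spec G=\{\boldsymbol{0}\}$, Knight's theorem applied to the computable, automorphically nontrivial field $\FFw(G)$ gives the rest of (a), and part (b) combines Proposition~\ref{P:Turing degrees}\eqref{I:same Turing degree} with Proposition~\ref{prop:extrafields}\eqref{I:dichotomy} exactly as the paper does. The only addition is your explicit verification that no infinite field is automorphically trivial, which the paper asserts without proof.
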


\begin{proof}\hfill
\begin{enumerate}[\upshape (a)]
\item
Every automorphically trivial graph is computable,
so $\Spec G = \{\boldsymbol{0}\}$.
The field $\FFw(G)$ is computable but not automorphically trivial;
Knight's theorem \cite{K86}*{Theorem~4.1} applied to $\FFw(G)$ 
yields the result.
\item 
Applying Proposition~\ref{P:Turing degrees}\eqref{I:same Turing degree} 
to every $G'$ isomorphic to $G$
yields $\Spec G \subseteq \Spec \FFw(G)$.
On the other hand, since $G$ is not automorphically trivial, 
Proposition~\ref{prop:extrafields}\eqref{I:dichotomy}
yields $\Spec \FFw(G) \subseteq \Spec G$.\qedhere
\end{enumerate}
\end{proof}

\begin{corollary}
\label{C:spectrum}
For every countable structure $\calA$ that is not automorphically trivial,
there exists $F \in \Fieldsw$ with $\Spec F = \Spec \calA$.
\end{corollary}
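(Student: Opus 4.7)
The plan is a two-step reduction: first convert the given structure $\calA$ into a graph with the same spectrum via the HKSS coding, then convert that graph into a field with the same spectrum via the functor $\FFw$.

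First I would apply \cite{HKSS}*{Theorem~3.1}, as recalled at the start of Section~\ref{sec:consequences}, to obtain a graph $G \in \Graphsw$ with $\Spec G = \Spec \calA$. Next I would verify that $G$ is not automorphically trivial, so that clause~\eqref{I:G is not automorphically trivial} of Theorem~\ref{T:spectra} (rather than clause~\eqref{I:G is automorphically trivial}) applies. This holds because Knight's theorem \cite{K86}*{Theorem~4.1}, applied to $\calA$, forces $\Spec \calA$ to be upward closed under Turing reducibility and therefore to contain infinitely many degrees, whereas every automorphically trivial structure has spectrum exactly $\{\boldsymbol{0}\}$; the equality $\Spec G = \Spec \calA$ then precludes automorphic triviality of $G$.

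Finally, Theorem~\ref{T:spectra}\eqref{I:G is not automorphically trivial} yields $\Spec \FFw(G) = \Spec G = \Spec \calA$, and I would take $F \colonequals \FFw(G)$, which lies in $\Fieldsw$ by construction of $\FFw$.

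The proof has essentially no serious obstacle, since all the substantive work—the HKSS coding of arbitrary structures into graphs, the construction of $\FFw$, and the spectrum-preservation statement of Theorem~\ref{T:spectra}—has already been carried out. The only minor subtlety is the verification that the intermediate graph $G$ is not automorphically trivial, since otherwise we would land in clause~\eqref{I:G is automorphically trivial} of Theorem~\ref{T:spectra}, whose conclusion (that $\Spec \FFw(G)$ contains all Turing degrees) would typically not match $\Spec \calA$.
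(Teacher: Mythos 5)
Your proof is correct and follows the paper's argument exactly: apply \cite{HKSS}*{Theorem~3.1} to get a graph $G$ with $\Spec G = \Spec\calA$, use Knight's theorem to rule out automorphic triviality of $G$ (since $\Spec\calA$ is upward closed and hence not $\{\boldsymbol{0}\}$, while automorphically trivial graphs have spectrum $\{\boldsymbol{0}\}$), and conclude with Theorem~\ref{T:spectra}\eqref{I:G is not automorphically trivial}. Your spelled-out justification of why $G$ is not automorphically trivial is just a more explicit version of the paper's one-line appeal to Knight's theorem.
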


\begin{proof}
Since $\calA$ is not automorphically trivial,
\cite{HKSS}*{Theorem~3.1} yields some $G \in \Graphsw$ 
such that $\Spec G = \Spec \calA\neq\{\bfz\}$.
By Knight's theorem, $G$ is not automorphically trivial,
so Theorem~\ref{T:spectra} shows $\Spec G = \Spec \FFw(G)$.
Let $F \colonequals \FFw(G)$.
\end{proof}

\subsection{Computable categoricity}
\label{S:computable categoricity}

Let $\calA$ be a computable structure, let $\bfd$ be a Turing degree,
and let $\alpha$ be a computable ordinal.
If $\calB$ is \emph{any} structure on $\omega$,
let $\calB^{(\alpha)}$ denote the $\alpha$-jump of the atomic diagram of $\calB$.
Let $\Isom_{\bfd}(\calA)$ be the set of $\bfd$-computable isomorphism classes
in the set of computable structures isomorphic to $\calA$;
then the cardinal $\#\Isom_{\bfd}(\calA) \in \{1,2,\ldots,\aleph_0\}$ is called 
the \defi{$\bfd$-computable dimension} of $\calA$ \cite{HKSS}*{Definition~1.2}.
The \defi{categoricity spectrum} of $\calA$
is the set of Turing degrees $\bfd$ 
such that the $\bfd$-computable dimension of $\calA$ is~$1$.
Finally, $\calA$ is \defi{relatively computably categorical}
if \emph{every} structure $\calB\cong\calA$ with domain $\omega$
is $\calB$-computably isomorphic to $\calA$,
and \defi{relatively $\Delta^0_\alpha$-categorical}
if every structure $\calB\cong\calA$ with domain $\omega$
is $\calB^{(\alpha)}$-computably isomorphic to $\calA$.

\begin{theorem}
\label{thm:effdim}
For every computable structure $\calA$,
there exists a computable field $F$ such that
\begin{enumerate}[\upshape (i)]
\item for each Turing degree $\bfd$, 
the field $F$ has the same $\bfd$-computable dimension as $\calA$;
\item $F$ has the same categoricity spectrum as $\calA$; and
\item for every computable ordinal $\alpha$, 
the field $F$ is relatively $\Delta_\alpha^0$-categorical if and only if $\calA$ is.
\end{enumerate}
\end{theorem}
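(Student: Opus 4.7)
The plan is to reduce the theorem to the analogous known statement for graphs, using the computable equivalence between $\Graphsw$ and $\Ew$ supplied by $\FFw$ and $\GGw$ in Theorem~\ref{T:computable functors}. If $\calA$ is automorphically trivial, then every copy of $\calA$ is computable and any two copies are computably isomorphic, so $\calA$ has $\bfd$-computable dimension $1$ for every $\bfd$ and is relatively $\Delta_\alpha^0$-categorical for every computable $\alpha$; any finite prime field $F$ satisfies the same three conclusions, so this case is trivial. Otherwise, \cite{HKSS}*{Theorem~3.1} produces a graph $G \in \Graphsw$ sharing the relevant computable-model-theoretic properties with $\calA$---namely the $\bfd$-computable dimension for every $\bfd$ and relative $\Delta_\alpha^0$-categoricity for every computable $\alpha$---and I then set $F := \FFw(G)$ and transfer these properties from $G$ to $F$.

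For (i), I build mutually inverse bijections between the set of $\bfd$-computable isomorphism classes of computable copies of $G$ and the corresponding set for $F$. Given a computable copy $F' \isom F$, we have $F' \in \Ew$ by definition of the essential image, and $\GGw(F')$ is a computable copy of $G$, using $\GGw \FFw = 1_{\Graphsw}$ (Proposition~\ref{P:GwFw=1}) together with the computable natural iso $\FFw \GGw \isom 1_{\Ew}$ (Proposition~\ref{P:FwGw}) to see that $\GGw(F') \isom G$. Conversely, $\FFw$ sends a computable copy $G' \isom G$ to a computable copy $\FFw(G') \isom F$. Both assignments preserve $\bfd$-computable isomorphism (because $\FFw$ and $\GGw$ are computable functors, they send $\bfd$-computable isomorphisms to $\bfd$-computable isomorphisms) and reflect it (by Proposition~\ref{P:recover f from G(f)}, a $\bfd$-computable graph iso yields a $\bfd$-computable field iso). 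Hence the assignments descend to mutually inverse bijections on $\bfd$-computable iso classes, proving~(i). Item~(ii) follows immediately, since $\bfd$ lies in the categoricity spectrum exactly when the $\bfd$-computable dimension equals~$1$.

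For (iii), suppose first that $G$ is relatively $\Delta_\alpha^0$-categorical, and let $F' \in \Fieldsw$ be any copy of $F$. Then $F' \in \Ew$, and $G' := \GGw(F')$ satisfies $G' \le_T F'$ and $G' \isom G$, so there is a $(G')^{(\alpha)}$-computable, hence $(F')^{(\alpha)}$-computable, graph iso $g \colon G' \to G$. Applying $\FFw$ yields an $(F')^{(\alpha)}$-computable field iso $\FFw(G') \to F$, which I precompose with the canonical $F'$-computable iso $F' \to \FFw(\GGw(F')) = \FFw(G')$ supplied by Proposition~\ref{P:FwGw} to obtain the required $(F')^{(\alpha)}$-computable iso $F' \to F$. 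Conversely, if $F$ is relatively $\Delta_\alpha^0$-categorical and $G' \isom G$, set $F' := \FFw(G') \le_T G'$, a copy of $F$; an $(F')^{(\alpha)}$-computable (hence $(G')^{(\alpha)}$-computable) iso $f \colon F' \to F$ is carried by $\GGw$ to a $(G')^{(\alpha)}$-computable iso $\GGw(f) \colon G' \to G$, using $\GGw \FFw = 1_{\Graphsw}$ to identify $\GGw(F') = G'$ on the nose.

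The principal technical subtlety lies in (iii), and it arises from the asymmetry between the two composite functors: $\GGw \FFw$ is literally the identity on $\Graphsw$ (Proposition~\ref{P:GwFw=1}), but $\FFw \GGw$ is only naturally isomorphic to the identity on $\Ew$ (Proposition~\ref{P:FwGw}), so the ``field-to-field'' direction of the argument requires an extra detour through the canonical natural iso. Fortunately that natural iso is computable from the source field $F'$ itself, a fortiori from $(F')^{(\alpha)}$, so the oracle cost is absorbed. All remaining steps are formal consequences of the computable equivalence of categories established by Theorem~\ref{T:computable functors}.
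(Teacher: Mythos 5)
Your proposal is correct and follows essentially the same route as the paper's proof: dispose of the automorphically trivial case directly (the paper uses $\Q$ where you use a finite prime field, an immaterial difference), otherwise pass to a graph $G$ via \cite{HKSS}*{Theorem~3.1}, set $F=\FFw(G)$, and transfer (i)--(iii) using the computability of $\FFw$ and $\GGw$, the equality $\GGw\FFw=1_{\Graphsw}$, and the computable isomorphism $\FFw\GGw\isom 1_{\Ew}$. Your handling of (iii) via $G'\colonequals\GGw(F')$ and the canonical computable isomorphism $F'\to\FFw(\GGw(F'))$ is exactly the paper's argument (there packaged as Corollary~\ref{C:G and f are computable}).
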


\begin{proof}
First suppose that $\calA$ is automorphically trivial.
Let $F=\Q$.
Then $\calA$ has $\bfd$-computable dimension~$1$ for every $\bfd$, 
and $\calA$ is relatively $\Delta_\alpha^0$-categorical
for all $\alpha$,
and the field $\Q$ has the same properties.

{}From now on, suppose that $\calA$ is not automorphically trivial.
Use \cite{HKSS}*{Theorem~3.1} to replace $\calA$
by a computable graph $G$ on domain $\omega$,
and let $F = \FFw(G)$.

(i) The functors $\FFw$ and $\GGw$ are computable 
(Theorem~\ref{T:computable functors}),
so they map computable objects of $\Graphsw$
to computable objects of $\Ew$ and vice versa,
and they respect isomorphism and $\bfd$-computable isomorphism
of such objects
(it will be OK to work in $\Ew$ instead of $\Fieldsw$ 
since all fields in $\Fieldsw$ isomorphic to $F$ are in $\Ew$).
Thus they induce maps between the sets $\Isom_{\bfd}(G)$ and $\Isom_{\bfd}(F)$.
The composition of these maps in either order is the identity
since $\GGw(\FFw(G'))$ is computably isomorphic (in fact, equal) to $G'$ 
for every $G' \in \Graphsw$ 
and $\FFw(\GGw(F'))$ is computably isomorphic to $F'$ for every $F' \in \Ew$
(Theorem~\ref{T:computable functors}\eqref{I:Fw and Gw are inverse}).
Thus $\#\Isom_{\bfd}(G)=\#\Isom_{\bfd}(F)$.

(ii) This follows from (i), since the categoricity spectrum is 
defined as the set of $\bfd$ 
such that the $\bfd$-computable dimension equals $1$.

(iii)
Fix $\alpha$.
If $G$ is not relatively $\Delta^0_\alpha$-categorical,
then some graph $G'$ violates the condition in the definition above,
and $\FFw(G)$ and $\FFw(G')$ violate the same condition, 
so $\FFw(G)$ is not relatively $\Delta^0_\alpha$-categorical either.
On the other hand, if it holds of $G$, then it holds immediately of $\FFw(G)$
and every other field $\FFw(G')$ with $G'\isom G$.  
But for a field $F\isom\FFw(G)$,
Corollary~\ref{C:G and f are computable} shows that there exists 
an $F$-computable isomorphism $f \colon F \to \FFw(H)$
for some graph $H$ that is isomorphic to $G$ and Turing-computable from $F$.
Therefore, there is a $(\deg H)^{(\alpha)}$-computable 
(hence $(\deg F)^{(\alpha)}$-computable)
isomorphism $g$ from $H$ onto $G$, 
whose image $\FFw(g)$ is a $(\deg F)^{(\alpha)}$-computable
isomorphism from $\FFw(H)$ onto $\FFw(G)$.  So the composition $\FFw(g)\circ f$
is a $(\deg F)^{(\alpha)}$-computable isomorphism from $F$ onto $\FFw(G)$,
proving that $\FFw(G)$ is also relatively $\Delta^0_\alpha$-categorical.
\end{proof}

\begin{corollary}
Fields realize all computable dimensions $\le \omega$.
\end{corollary}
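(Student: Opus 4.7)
The plan is to combine Theorem~\ref{thm:effdim} with the known realization result for graphs mentioned in Section~\ref{S:computable model theory}, namely that for every $n \in \{1, 2, 3, \ldots, \omega\}$ there exists a computable symmetric irreflexive graph $G_n \in \Graphsw$ of computable dimension exactly $n$. Granting that input, the corollary is essentially immediate.

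First, I would handle the easy case $n = 1$ separately by exhibiting any computable field outright, for example $F_1 \colonequals \Q$, which is relatively computably categorical and hence of computable dimension $1$. This sidesteps the automorphic triviality caveat built into the statement of Theorem~\ref{thm:effdim}. For each $n \in \{2, 3, \ldots, \omega\}$, I would invoke the known graph realization result to pick a computable graph $G_n$ of computable dimension $n$; such $G_n$ is not automorphically trivial, since automorphically trivial computable structures have computable dimension $1$.

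Next, I would feed each such $G_n$ into Theorem~\ref{thm:effdim} (with $\calA = G_n$) to obtain a computable field $F_n$ having the same $\bfd$-computable dimension as $G_n$ for every Turing degree $\bfd$. Specializing $\bfd = \bfz$ gives that $F_n$ has computable dimension $n$, as required. Collecting the fields $F_n$ for $n \in \{1, 2, \ldots, \omega\}$ finishes the proof.

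There is no serious obstacle here: the real work has already been absorbed into Theorem~\ref{thm:effdim}, whose proof ultimately rests on the computable equivalence of categories $\FFw, \GGw$ and on \cite{HKSS}*{Theorem~3.1}. The only thing to verify is that the graph side does realize every $n \le \omega$, and this is precisely the known fact cited in Section~\ref{S:computable model theory}. Accordingly, the proof should be presented as a one-line deduction from Theorem~\ref{thm:effdim}, with a brief parenthetical pointer to the graph realization result.
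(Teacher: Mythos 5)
Your proof is correct and follows essentially the same route as the paper: cite the classical realization of every computable dimension $\le\omega$ by computable graphs (Goncharov et al.) and push it through Theorem~\ref{thm:effdim}. The separate treatment of $n=1$ is harmless but unnecessary, since Theorem~\ref{thm:effdim} is stated for \emph{every} computable structure, including automorphically trivial ones.
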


\begin{proof}
Theorems of Goncharov \cites{G77,G80} 
and others have shown long since that
computable structures (in fact, graphs) can have every computable
dimension from $1$ up to $\aleph_0$.  (See \cite{AK00}*{\S 12.6}
for a summary of these results using the terminology of this article.)
Theorem~\ref{thm:effdim} allows us to carry this over to fields.
\end{proof}

The basic definition of computable categoricity of a computable structure $\calA$
shows computable categoricity to be a $\Pi^1_1$ property:
it is defined by a statement using a universal quantifier over \emph{sets} of natural
numbers.  One can view it as saying that, for all subsets $f$ of $\omega^2$ and all $e\in\omega$,
either the $e$th computable function $\varphi_e$ fails to define a computable structure,
or the function (if any) defined on $\omega$ by $f$ fails to be an isomorphism from $\calA$
onto the structure defined by $\varphi_e$, or else there exists a Turing program which computes
an isomorphism from that structure onto $\calA$.  All quantifiers over natural numbers
can be absorbed into the universal quantifier over sets, so we view this formula
as universal over sets, i.e., $\Pi^1_1$, with the superscript $1$ signifying quantification
over sets.

Sometimes a $\Pi^1_1$ property is expressible also by a simpler formula.  
In \cite{DKLLMT}*{Theorem~1}, however, 
Downey, Kach, Lempp, Lewis, Montalb\'an, and Turetsky
give a proof that computable categoricity for trees is $\Pi^1_1$-complete.  
Theorem~\ref{T:computable functors} allows us to transfer this result to fields.

\begin{theorem}
\label{thm:Pi11}
The property of computable categoricity for computable fields
is $\Pi^1_1$-complete.  
\end{theorem}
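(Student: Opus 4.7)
The plan is to combine the known $\Pi^1_1$-completeness of computable categoricity for trees from \cite{DKLLMT}*{Theorem~1} with an index-level computable reduction from trees through graphs to fields. The $\Pi^1_1$ upper bound is immediate from the universal-over-sets formulation discussed just before the statement of the theorem: nothing there is specific to the language, so it applies to the language of rings verbatim.

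For $\Pi^1_1$-hardness, I would exhibit a computable function $f$ on indices such that $T_e$ is a computably categorical computable tree if and only if the computable field with index $f(e)$ is computably categorical. The function $f$ is the composition of two index-level maps. First, the uniform construction underlying \cite{HKSS}*{Theorem~3.1} converts an index of a computable tree $T$ into an index of a computable graph $G \in \Graphsw$ having the same $\bfd$-computable dimension as $T$ for every Turing degree $\bfd$. Second, the Turing functional that witnesses computability of $\FFw$ (Definition~\ref{defn:computablefunctor}, established in Theorem~\ref{T:computable functors}\eqref{I:Fw is computable}) converts an index of $G$ into an index of the computable field $F \colonequals \FFw(G)$; by Theorem~\ref{thm:effdim}(i), $F$ has the same $\bfd$-computable dimension as $G$ for every $\bfd$. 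Specializing to $\bfd=\bfz$ gives the desired equivalence ``$T$ computably categorical iff $G$ is iff $F$ is.''

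The main point to verify is that both steps in the composition are genuinely uniform in the index, not merely non-uniform existence statements. The second step is uniform by design, since Definition~\ref{defn:computablefunctor} supplies a fixed Turing functional. The first step requires reading off the construction in \cite{HKSS} and checking that it is given by a fixed Turing reduction on atomic diagrams; this bookkeeping is the only real obstacle. A minor technicality concerns automorphically trivial inputs, for which \cite{HKSS}*{Theorem~3.1} is silent; but one may assume without loss of generality that the \cite{DKLLMT} reduction already lands in non-automorphically-trivial trees (by inspecting its construction), so that case does not arise in the composed reduction.
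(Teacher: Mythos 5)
Your proposal is correct and follows essentially the same route as the paper: the $\Pi^1_1$ upper bound from the universal-over-sets formulation, plus a hardness reduction obtained by composing the index-uniform tree-to-graph construction of \cite{HKSS}*{Theorem~3.1} with the computable functor $\FFw$, which preserves computable categoricity by the computable equivalence of categories. Your extra remarks on uniformity of the \cite{HKSS} step and on automorphically trivial inputs are reasonable points of care that the paper's own (terser) proof leaves implicit.
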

That is, 
\[
	\set{e\in\omega}{\varphi_e\text{~computes the atomic diagram of a
computably categorical field}}
\]
is a $\Pi^1_1$ set, and every $\Pi^1_1$ set is $1$-reducible to this set.

In contrast, the property of computable catgeoricity for algebraic fields
is just $\Pi^0_4$-complete (see \cite{HKMS}*{Theorem~5.4}).  
For fields of infinite transcendence degree, it was shown only recently,
in \cite{MS}*{Theorem~3.4}, 
that they could be computably categorical at all, and it was not known whether
they could be computably categorical without being relatively computably
categorical, which is a $\Sigma^0_3$ property.  
So Theorem~\ref{thm:Pi11} represents a significant step forward.

\begin{proof}
Theorem~3.1 of \cite{HKSS} enables us to build a computable graph
corresponding to an arbitrary computable tree, effectively in the
index of the tree, and then Theorem~\ref{T:computable functors} 
builds a computable
field that is computably categorical if and only if the graph
(and hence the original tree) was.  Thus we have a $1$-reduction
from a $\Pi^1_1$-complete set (the indices of computably categorical trees)
to the set of indices of computably categorical fields.
\end{proof}

\subsection{Degree spectrum}

The \defi{degree spectrum} $\DS{\calA}{R}$ of a relation $R$ on a computable
structure $\calA$ is the set of all Turing degrees of images of $R$
under isomorphisms $f$ from $\calA$ onto computable structures $\calB$.
(Understand that here the relation $R$ is generally not in the language;
if it were, then this image would always be computable, just by the
definition of a computable structure.  
The relation $R$ may be \emph{definable} in
the language of the structure, however, in which case its definition places an
upper bound on the complexity of these images.)  
If $f$ is computable, then $f(R)$ is Turing-equivalent to $R$ itself;
thus it is the noncomputable $f$ that give this definition its teeth.
Here we show that all degree spectra of relations on 
automorphically nontrivial computable structures 
can be realized as degree spectra of relations on fields:

\begin{theorem}
\label{thm:relations}
Let $\calA$ be any computable structure that is not automorphically
trivial, and let $R$ be an $n$-ary relation on $\calA$.  
Then there exists a field $F$ and an $n$-ary relation $S$ on $F$ such that
\[
	 \DS{\calA}{R} = \DS{F}{S}.
\]
\end{theorem}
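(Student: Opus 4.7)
The plan is to first reduce to the case of graphs via \cite{HKSS}, and then to transfer from graphs to fields using the computable equivalence of categories provided by $\FFw$ and $\GGw$. Since $\calA$ is not automorphically trivial, \cite{HKSS}*{Theorem~3.1} furnishes a computable graph $G \in \Graphsw$ together with an $n$-ary relation $R'$ on $G$ satisfying $\DS{G}{R'} = \DS{\calA}{R}$. I will set $F \colonequals \FFw(G)$, which is a computable field by Proposition~\ref{P:Fw is computable}. Using the canonical computable injection $\mu \colon G \to F$ from Lemma~\ref{L:mu} (whose image is a computable subset of $F$ and whose restricted inverse is computable), I define the relation $S \subseteq F^n$ to be the pushforward of $R'$:
\[
S \colonequals \set{(\mu(i_1),\ldots,\mu(i_n))}{(i_1,\ldots,i_n) \in R'}.
\]

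The core task will be to prove $\DS{F}{S} = \DS{G}{R'}$. For each $F'' \in \Ew$, I will consider the ``first coordinate of $\delta_{F''}$'' map $\mu_{F''} \colon \omega \to F''$; by Lemma~\ref{L:enumeration of X(F)} this is computable from an $F''$-oracle, with computable image in $F''$ and computable inverse on that image, and it reduces to $\mu$ when $F'' = F$. Given any isomorphism $f \colon F \to F'$ in $\Fieldsw$, I put $g \colonequals \GGw(f) \colon G \to G'$ with $G' \colonequals \GGw(F')$, where $\GGw(F) = G$ comes from Proposition~\ref{P:GwFw=1}. Unwinding the definition of $\GGw$, the key compatibility
\[
f \circ \mu \;=\; \mu_{F'} \circ g
\]
should follow from the fact that both sides send $i \in G$ to the first coordinate of $f(\delta_F(i)) = \delta_{F'}(g(i))$, and this will imply $f(S) = \mu_{F'}(g(R'))$.

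From here the two inclusions will drop out. For $\DS{F}{S} \subseteq \DS{G}{R'}$: when $F'$ is computable, $G' = \GGw(F')$ is also computable by Proposition~\ref{P:Gw is computable}, $\mu_{F'}$ becomes outright computable, and so $\deg f(S) = \deg g(R') \in \DS{G}{R'}$. For the reverse, given an isomorphism $g \colon G \to G'$ with $G'$ computable, I set $f \colonequals \FFw(g)$; then $\FFw(G')$ is computable and $\GGw(f) = g$ by Proposition~\ref{P:GwFw=1}, so the same identity gives $\deg f(S) = \deg g(R') \in \DS{F}{S}$. The main obstacle I foresee is verifying the compatibility $f \circ \mu = \mu_{F'} \circ g$ in the case where $F'$ is \emph{not} in the image of $\FFw$; this requires tracing carefully through the definitions of $\GGw$, $\delta_{F'}$, and $\mu$, but the argument should go through since $\mu_{F'}$ depends only on $F'$ (via $\delta_{F'}$) and not on any particular graph presentation.
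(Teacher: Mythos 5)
Your reduction to graphs via \cite{HKSS}*{Theorem~3.1}, your choice of $S = \mu(R')$, and your treatment of the inclusion $\DS{G}{R'} \subseteq \DS{F}{S}$ (where the target field is literally $\FFw(G')$ and Lemma~\ref{L:mu} applies) all match the paper's proof. The gap is in the other inclusion, and it is not where you suspect. The compatibility $f \circ \mu = \mu_{F'} \circ \GGw(f)$ is fine for arbitrary $F' \in \Ew$: by construction $\GGw(f) = \delta_{F'}^{-1} \circ f \circ \delta_F$, so $\delta_{F'}(\GGw(f)(i)) = f(\delta_F(i))$, and taking first coordinates (together with $\epsilon_G = \delta_{\FFw(G)}$ from Lemma~\ref{L:order-preserving}) gives the identity. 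What does \emph{not} follow from Lemma~\ref{L:enumeration of X(F)}, contrary to your assertion, is that $\mu_{F''}$ has computable image in an arbitrary $F'' \in \Ew$. That image is $\{w \in F'' : (\exists v)\, p(w,v)=0\}$, which on its face is only $\Sigma^0_1$ relative to $F''$: a search certifies membership, but nothing certifies non-membership. Lemma~\ref{L:enumeration of X(F)} gives computability of $\delta_{F''}$ and of $\delta_{F''}^{-1}$ on the decidable set $X(F'') \subseteq F'' \times F''$, but says nothing about deciding which field elements occur as first coordinates of such points. Lemma~\ref{L:mu} does give this, but only for the literal standard presentation $\FFw(G)$, where the proof uses the column structure $\omega^{[2i+2]}$ of that particular presentation.

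This matters: from $f(S) = \mu_{F'}(g(R'))$ with $\mu_{F'}$ injective and computable you do get $g(R') \le_T f(S)$, but the reverse reduction $f(S) \le_T g(R')$ requires deciding, for a given tuple in $(F')^n$, whether its coordinates lie in $\im(\mu_{F'})$; without that, $\vec{w} \in f(S)$ is only the $\Sigma^0_1(g(R'))$ condition $\exists \vec{\imath}\,(\mu_{F'}(\vec{\imath})=\vec{w} \wedge \vec{\imath} \in g(R'))$, and the degree equality is not established. The repair is exactly the paper's move: before comparing degrees, invoke Corollary~\ref{cor:extrafields} to obtain a computable graph $G'$ and a \emph{computable} isomorphism $F' \to \FFw(G')$; composing $f$ with a computable isomorphism changes neither $\deg f(S)$ nor anything else relevant, so one may assume $F' = \FFw(G')$ literally, where $\mu_{F'} = \mu$ and Lemma~\ref{L:mu} supplies the computable image and computable inverse you need (and full faithfulness identifies $f$ with $\FFw(g)$ for an isomorphism $g \colon G \to G'$). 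With that one step inserted, your argument coincides with the paper's.
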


\begin{proof}
As usual, we appeal to \cite{HKSS}*{Theorem~3.1} to assume
that $\calA$ is in fact a graph, hereafter named $G$.
For this proof, Theorem~\ref{T:computable functors} is not quite enough;
to transfer relations we need also Lemma~\ref{L:mu},
about the map $\mu \colon G \to \FFw(G)$ and its inverse.
Let $F = \FFw(G)$, 
and let $S = \mu(R) \subseteq F^n$ (apply $\mu$ coordinate-wise).

Suppose that $g \colon G \to G'$ is an isomorphism of
computable graphs in $\Graphsw$.
In the diagram of sets
\[
\xymatrix{
G \ar[d]_g \ar[r]^-\mu & \FFw(G) \ar[d]^{\FFw(g)} \\
G' \ar[r]^-\mu & \FFw(G'), \\
}
\]
$R$ and $S$ map downwards to relations $R'$ on $G'$
and $S'$ on $\FFw(G')$,
and $\DS{G}{R}$ consists of the degrees $\deg R'$
arising in this way from all possible $g$.
By functoriality of $\mu$, the diagram commutes,
so the bottom horizontal $\mu$ maps $R'$ to $S'$.
Since $\mu$ and $\mu^{-1}$ are computable, $R' \equiv_T S'$.
Thus $\deg R' = \deg S' \in \DS{F}{S}$.
Hence $\DS{G}{R}\subseteq\DS{F}{S}$.

Now suppose that $f \colon F \to F'$ is an isomorphism of computable fields
in $\Fieldsw$;
it maps $S$ to some $S'$;
then $\DS{F}{S}$ consists of the degrees $\deg S'$
arising in this way.
By Corollary~\ref{cor:extrafields},
there is a computable graph $G' \isom G$ and a computable
isomorphism $i \colon F' \to \FFw(G')$.
Composing $f$ with a computable isomorphism does not change 
the resulting Turing degree $\deg S'$,
so we may assume that $F'$ \emph{equals} $\FFw(G')$.
Since $\FFw$ is fully faithful,
$f$ is $\FFw(g)$ for some isomorphism $g \colon G \to G'$.
This $g$ maps $R$ to some $R'$,
and the previous paragraph shows that $\deg S' = \deg R' \in \DS{G}{R}$.
Hence $\DS{F}{S}\subseteq\DS{G}{R}$.
\end{proof}

\begin{remark}
It is not true that 
for every automorphically nontrivial computable structure $\calA$,
there exists a computable field $F$ of characteristic~$0$
such that for every relation $S$ on $F$,
there exists a relation $R$ on $\calA$
with the same degree spectrum.
For example, suppose that $\calA$ 
is the random graph or the countable dense linear order;
then the degree spectrum of every relation on $\calA$ 
either is $\{\bfz\}$ or is upwards-closed under $\leq_T$;
see \cite{HM}*{Corollary~2.11 and Proposition~3.6}.
On the other hand, 
in every computable field $F$ of characteristic $0$, 
one can effectively locate each positive integer $n$ 
(meaning the sum $1+\cdots+1$ 
of the multiplicative identity with itself $n$ times --- not the
element $n$ of the domain $\omega$), and therefore the unary relation $S$
consisting of those $n$ that lie in the Halting Problem 
will have degree spectrum exactly $\{\bfz'\}$.
\end{remark}

The restriction to automorphically nontrivial structures $\calA$
in Theorem~\ref{thm:relations} 
can be bypassed for unary relations, since on an automorphically
trivial computable structure $\calA$, each unary relation has degree spectrum
either $\{\boldsymbol{0}\}$ (if the relation is either finite or cofinite)
or else the set of all Turing degrees.  Both of these can
easily be realized as spectra of unary relations on fields.
We leave the analysis of $n$-ary relations on such structures
for another time.

Functoriality also allows one to show
that a relation $R$ on a countable graph $G$ is
relatively intrinsically $\Sigma^0_\alpha$ if and only if
its image $h(R)$ (defined exactly as in the proof of 
Theorem~\ref{thm:relations}) is relatively intrinsically $\Sigma^0_\alpha$
on the field $\FFw(G)$; and similarly for relations
that are relatively intrinsically $\Pi^0_\alpha$,
$\Sigma^1_m$, etc.  Likewise, every definable
relation on $G$ is mapped to a definable relation
on $\FFw(G)$:  this is immediate if one views
our construction as a bi-interpretation between the graph
and the field.

\subsection{Automorphism spectrum}

In \cite{HMM}*{Definition~1.1}, Harizanov, Miller, and Morozov defined the
\defi{automorphism spectrum} of a computable structure
to be the set of all Turing degrees of nontrivial automorphisms
of the structure.  
They used \cite{HKSS}*{Theorem~3.1} to show that 
every automorphism spectrum is the automorphism spectrum of a computable graph.
This sets up another application of Theorem~\ref{T:computable functors}.

\begin{theorem}
\label{thm:autspec}
For every computable structure $\calA$, there is a computable field $F$
with the same automorphism spectrum as $\calA$.
\end{theorem}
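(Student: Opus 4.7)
The plan is to invoke the Harizanov--Miller--Morozov reduction to replace $\calA$ by a computable graph with the same automorphism spectrum, and then transfer automorphisms across the functor $\FFw$; by our earlier work this will induce a bijection $\Aut(G)\to\Aut(\FFw(G))$ that preserves Turing degrees and sends nontrivial automorphisms to nontrivial ones.

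Concretely, I first apply the result of \cite{HMM} cited just before the statement of Theorem~\ref{thm:autspec} to produce a computable graph $G\in\Graphsw$ whose automorphism spectrum equals that of $\calA$, and set $F\colonequals\FFw(G)$; this $F$ is computable by Proposition~\ref{P:Fw is computable}. Full faithfulness of $\FFw$ (Theorem~\ref{T:fully faithful}) gives a bijection $\Hom(G,G)\to\Hom(F,F)$, $g\mapsto\FFw(g)$, and since $\FFw$ is a functor, $g$ is invertible if and only if $\FFw(g)$ is. Thus this bijection restricts to one $\Aut(G)\to\Aut(F)$ that carries $1_G$ to $1_F$ and hence carries nontrivial automorphisms bijectively onto nontrivial automorphisms. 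Since $G$ is computable, every $g\in\Aut(G)$ is a morphism between computable graphs in $\Graphsw$, so Proposition~\ref{P:Turing degrees}\eqref{I:same Turing degree for morphisms} yields $g\equiv_T\FFw(g)$ and thus $\deg g=\deg\FFw(g)$. Combining these two observations, $F$ has the same automorphism spectrum as $G$, which equals that of $\calA$.

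The main step requiring care is the initial reduction of $\calA$ to a graph: the HMM argument is based on the HKSS coding and applies cleanly when $\calA$ is automorphically nontrivial. If $\calA$ is instead automorphically trivial, then its automorphism group contains every permutation of $\dom(\calA)=\omega$ fixing some prescribed finite set $S_0$ pointwise, so its automorphism spectrum is the set of all Turing degrees; in this case we may simply take $G$ to be the edgeless graph on $\omega$, whose automorphism group is the full symmetric group on $\omega$ and whose automorphism spectrum is likewise the set of all Turing degrees, matching $\AutSpec(\calA)$. Either way, $F=\FFw(G)$ is the desired computable field.
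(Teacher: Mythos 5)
Your proof is correct and follows essentially the same route as the paper: reduce $\calA$ to a computable graph with the same automorphism spectrum via the Harizanov--Miller--Morozov result, then transfer automorphisms through $\FFw$ using full faithfulness and the degree preservation of Proposition~\ref{P:Turing degrees}\eqref{I:same Turing degree for morphisms}. Your extra case analysis for automorphically trivial $\calA$ is a reasonable precaution but not a departure, since the paper cites \cite{HMM} as already covering every automorphism spectrum of a computable structure.
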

\begin{proof}
The theorem follows from the full faithfulness of $\FFw$,
along with its preservation of Turing degrees of automorphisms
(Proposition~\ref{P:Turing degrees}\eqref{I:same Turing degree for morphisms}).
\end{proof}

\appendix
\section{Algebraic geometry facts}

\begin{lemma}
\label{L:points over function field}
If $V$ and $W$ are varieties over a field $k$,
and $W$ is integral, 
then $V(k(W))$ is in bijection with the set
of rational maps $W \dashrightarrow V$.
\end{lemma}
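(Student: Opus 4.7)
The plan is to identify the generic point $\eta$ of $W$ with $\Spec k(W)$ and use it to translate between rational maps and $k(W)$-points.

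First I would set up both sides explicitly. Since $W$ is integral, it has a unique generic point $\eta$, and the local ring $\mathcal{O}_{W,\eta}$ is the function field $k(W)$. A $k(W)$-point of $V$ is by definition a $k$-morphism $\psi\colon\Spec k(W)\to V$, equivalently the map of topological spaces sending the unique point of $\Spec k(W)$ to some $v\in V$, together with a local $k$-algebra map $\mathcal{O}_{V,v}\to k(W)$. A rational map $W\dashrightarrow V$ is an equivalence class $[(U,f)]$ with $U\subseteq W$ a nonempty (hence dense, since $W$ is irreducible) open and $f\colon U\to V$ a $k$-morphism, where $(U,f)\sim(U',f')$ iff $f$ and $f'$ agree on $U\cap U'$.

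Next I would define the forward map $\Phi$ from rational maps to $V(k(W))$. Given $(U,f)$, the generic point of $W$ lies in $U$, and restricting $f$ along $\Spec k(W)=\Spec\mathcal{O}_{W,\eta}\to U$ gives an element $\Phi([(U,f)])\in V(k(W))$. Well-definedness on equivalence classes is immediate since two representatives agree on $U\cap U'$, which still contains $\eta$.

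For the inverse $\Psi$, start with $\psi\colon\Spec k(W)\to V$, let $v\in V$ be its image, and choose an affine open neighborhood $\Spec A\subseteq V$ containing $v$; then $\psi$ corresponds to a $k$-algebra homomorphism $\varphi\colon A\to k(W)$. Writing $A=k[t_1,\dots,t_n]/I$, each $\varphi(t_i)$ is an element of $k(W)$, hence is a regular function on some nonempty open $U_i\subseteq W$; let $U=\bigcap U_i$, which is nonempty since $W$ is irreducible. The tuple $(\varphi(t_1),\dots,\varphi(t_n))$ defines a $k$-morphism $U\to\Spec k[t_1,\dots,t_n]$; to see that it lands in $\Spec A$, note that for every $h\in I$ the function $\varphi(h)\in k(W)$ is the image of $h\in I$, which is zero, so the morphism factors through $V(I)=\Spec A\hookrightarrow V$. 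Define $\Psi(\psi)=[(U,f)]$ where $f$ is this morphism; independence of the choice of affine $\Spec A$ follows from the fact that any two such choices agree at $\eta$ and hence on a common open neighborhood by the separatedness / uniqueness of extension to an open set.

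Finally I would check $\Phi\circ\Psi=\operatorname{id}$ and $\Psi\circ\Phi=\operatorname{id}$. One direction is essentially tautological: evaluating the $f$ constructed in $\Psi$ at the generic point recovers the original $k$-algebra map $\varphi$, hence $\psi$. The other direction says that a rational map is determined by its restriction to the generic point; this is the usual principle that if two morphisms $f,f'\colon U\to V$ from an integral scheme agree at the generic point, they agree on a dense open, which I would prove by covering $V$ by affines, reducing to the affine case, and observing that equal images of the coordinate functions in $k(W)$ means equality on the intersection of the open sets where those rational functions are defined. The main (mild) obstacle is just bookkeeping in the inverse construction—checking that the local algebraic data $A\to k(W)$ really does spread out to a morphism on an open set independent of all choices—which is a standard ``spreading out from the generic point'' argument.
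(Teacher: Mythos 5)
Your proof is correct and takes essentially the same approach as the paper, which disposes of the lemma in two lines by the same ``spreading out from the generic point'' principle: a $k(W)$-point of $V$ involves only finitely many elements of $k(W)$, all regular on some dense open $U\subseteq W$. You have simply written out in full the bookkeeping that the paper leaves implicit.
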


\begin{proof}
The description of a point in $V(k(W))$ involves only finitely many
elements of $k(W)$, and there is a dense open subvariety $U \subseteq W$
on which they are all regular.
\end{proof}

\begin{lemma}
\label{L:two curves}
Let $k$ be a field of characteristic~$0$.
Let $C$ and $D$ be geometrically integral curves over $k$
such that $g_C=g_D > 1$.
Every nonconstant rational map $C \dashrightarrow D$ is birational.
\end{lemma}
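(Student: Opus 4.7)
The plan is a standard Riemann--Hurwitz argument, with a small amount of care to reduce to the smooth projective setting. First, let $\tilde C$ and $\tilde D$ denote the smooth projective models of $C$ and $D$ respectively. Since $g_C$ and $g_D$ are defined as the dimensions of the spaces of global $1$-forms on these models, we have $g_{\tilde C} = g_C = g_D = g_{\tilde D} =: g$, and by hypothesis $g > 1$. A nonconstant rational map $\varphi \colon C \dashrightarrow D$ induces a nonconstant rational map $\tilde\varphi \colon \tilde C \dashrightarrow \tilde D$ between function fields (the function fields of $C,\tilde C$ coincide, and similarly for $D,\tilde D$), and since $\tilde C$ is a smooth curve and $\tilde D$ is projective, $\tilde\varphi$ extends uniquely to a morphism $f \colon \tilde C \to \tilde D$.

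Next, I would base-change to the algebraic closure $\bar k$, which is harmless because geometric integrality, genus, and the property of being birational are all preserved and detected after base change, and $\Char \bar k = 0$. So I may assume $k$ is algebraically closed of characteristic zero, and $f \colon \tilde C \to \tilde D$ is a nonconstant (hence finite and surjective) morphism of smooth projective geometrically connected curves.

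Now apply the Riemann--Hurwitz formula:
\[
	2g_{\tilde C} - 2 \;=\; \deg(f) \cdot (2 g_{\tilde D} - 2) + \deg R,
\]
where $R$ is the ramification divisor. Since $\Char k = 0$, the map $f$ is separable, so $R$ is effective and $\deg R \ge 0$. Substituting $g_{\tilde C} = g_{\tilde D} = g$ gives
\[
	(2g-2) \;\ge\; \deg(f)\cdot (2g-2),
\]
and since $g > 1$ we have $2g-2 > 0$, forcing $\deg(f) \le 1$. As $f$ is nonconstant, $\deg(f) = 1$, so $f$ is birational, and therefore so is $\varphi$.

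There is no real obstacle here; the only point requiring a brief mention is the reduction to the smooth projective models (to ensure $f$ is an honest morphism and Riemann--Hurwitz applies) and the use of characteristic zero to guarantee separability, which in turn guarantees the effectivity of $R$.
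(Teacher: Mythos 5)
Your proof is correct and is exactly the argument the paper has in mind: the paper's proof of this lemma is the one-line remark that it ``is a well known consequence of Hurwitz's formula,'' and your write-up simply supplies the standard details (passage to smooth projective models, base change to $\bar k$, separability from characteristic $0$, and the degree bound forcing $\deg f=1$).
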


\begin{proof}
This is a well known consequence of Hurwitz's formula.
\end{proof}

\begin{lemma}
\label{L:rational maps from a product}
Let $V_1,\ldots,V_n$ be geometrically integral varieties 
over a field $k$ of characteristic~$0$.
Let $C$ be a geometrically integral curve over $k$ such that $g_C>1$.
Then each rational map $V_1 \times \cdots \times V_n \dashrightarrow C$
factors through the projection
$V_1 \times \cdots \times V_n \to V_i$ for at least one $i$.
\end{lemma}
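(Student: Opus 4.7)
The plan is to induct on $n$, with $n=1$ being trivial. For $n \ge 2$, write $W \colonequals V_1 \times \cdots \times V_{n-1}$ and $K \colonequals k(W)$. Restricting $\phi \colon W \times V_n \dashrightarrow C$ to the generic fiber of the projection to $W$ yields a rational map $\phi_\eta \colon (V_n)_K \dashrightarrow C_K$ over $K$; since $C$ is geometrically integral with $g_C > 1$, so is $C_K$.

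If $\phi_\eta$ is constant, it corresponds to a $K$-point of $C$, so $\phi$ factors through the projection to $W$ as some rational map $\psi \colon W \dashrightarrow C$. The inductive hypothesis applied to $\psi$ yields an index $i \le n-1$ through whose projection $\psi$, hence $\phi$, factors. If instead $\phi_\eta$ is non-constant, I claim $\phi$ factors through the projection to $V_n$. Viewing $\phi$ as a family of rational maps $\phi_w \colon V_n \dashrightarrow C$ parameterized by $w$ in a dense open of $W$, we pass to a resolution of singularities $\widetilde{V}_n$ (Hironaka, char~$0$) and to the smooth projective model $\overline{C}$ of $C$ to obtain a family of non-constant morphisms $\widetilde{V}_n \to \overline{C}$ parameterized by $W$. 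By the de Franchis--Severi rigidity theorem, the scheme $\Hom_k(\widetilde{V}_n, \overline{C})$ has only zero-dimensional components away from the locus of constant morphisms, so the morphism $W \to \Hom_k(\widetilde{V}_n, \overline{C})$ sending $w \mapsto \phi_w$ is constant on the irreducible $W$. Hence $\phi_w$ is independent of $w$, and $\phi$ depends only on its $V_n$-coordinate.

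The main obstacle is the rigidity step. The cleanest justification is the de Franchis--Severi theorem, whose infinitesimal form follows from the vanishing of $H^0(\widetilde{V}_n, f^* T_{\overline{C}})$ for any non-constant $f$, which in turn follows from the negativity of $T_{\overline{C}}$ when $g_C > 1$. An alternative, more self-contained route combines Lemma~\ref{L:two curves} (applied fiberwise over curves inside $\widetilde{V}_n$) with the Hurwitz bound on $\#\Aut(\overline{C})$ to force the moduli-theoretic map from $W$ to take only finitely many values, and hence to be constant. Minor technicalities---base-changing to $\kbar$ and descending, and ensuring rational maps $\widetilde{V}_n \dashrightarrow \overline{C}$ extend to morphisms because $\overline{C}$ is a smooth projective curve---are routine.
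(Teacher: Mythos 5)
Your proposal is correct and follows essentially the same route as the paper: reduce by induction to splitting off one factor, view the map as an algebraic family of rational maps from one factor to $C$, and invoke de Franchis--Severi rigidity (nonconstant maps to a curve of genus $>1$ do not vary in families) to conclude that the family is either constant in the parameter or fiberwise constant, giving the factorization. The paper's version is just terser, citing Lang for de Franchis--Severi rather than spelling out the Hom-scheme/$H^0(f^*T_{\overline{C}})=0$ justification.
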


\begin{proof}
By induction, we may assume that $n=2$.
We may assume that $k$ is algebraically closed.
A rational map $\phi \colon V_1 \times V_2 \dashrightarrow C$
may be viewed as an algebraic family of rational maps $V_1 \dashrightarrow C$
parametrized by (an open subvariety of) $V_2$.
By the de Franchis--Severi theorem \cite{Lang1960}*{pp.~29--30}
there are only finitely many nonconstant rational maps 
$V_1 \dashrightarrow C$, so they do not vary in algebraic families.
Thus either the rational maps in the family are all the same,
in which case $\phi$ factors through the first projection,
or each rational map in the family is constant,
in which case $\phi$ factors through the second projection.
\end{proof}

\begin{remark}
Lemma~\ref{L:rational maps from a product} holds even if $\Char k=p$.
This can be deduced from the characteristic~$p$ analogue of
the de Franchis--Severi theorem \cite{Samuel1966}*{Th\'eor\`eme~2}:
even though the set of nonconstant rational maps $V_1 \dashrightarrow C$
can now be infinite (because of Frobenius morphisms),
they still do not vary in algebraic families.
\end{remark}


\begin{bibdiv}
\begin{biblist}


\bib{AK00}{book}{
author={Ash, C.J.}, 
author={Knight, J.F.}, 
title= { Computable Structures and the Hyperarithmetical Hierarchy}, 
date={2000}, 
publisher={Elsevier},
address={Amsterdam},
}

\bib{DJ94}{article}{
   author={Downey, R.G.},
   author={Jockusch, C.G., Jr.},
   title={Every low Boolean algebra is isomorphic to a recursive one},
   journal={Proceedings of the American Mathematical Society},
   volume={122},
   date={1994},
   pages={871--880}
}

\bib{DKLLMT}{article}{
   author={Downey, R.G.},
   author={Kach, A.M.},
   author={Lempp, S.},
   author={Lewis-Pye, A.E.M.},
   author={Montalb\'an, A.},
   author={Turetsky, D.D.},
   title={The complexity of computable categoricity},
   journal={Advances in Mathematics},
   volume={268},
   date={2015},
   pages={423--466}
}

\bib{FKM10}{article}{
   author={Fokina, Ekaterina},
   author={Kalimullin, Iskander},
   author={Miller, Russell},
   title={Degrees of categoricity of computable structures},
   journal={Archive for Mathematical Logic},
   volume={49},
   date={2010},
   pages={51--67}
}

\bib{G77}{article}{
   author={Goncharov, S.S.},
   title={The quantity of nonautoequivalent constructivizations},
   journal={Algebra and Logic},
   volume={16},
   date={1977},
   pages={169--185 
 (English translation)}
}

\bib{G80}{article}{
   author={Goncharov, S.S.},
   title={The problem of the number of nonautoequivalent constructivizations},
   journal={Algebra and Logic},
   volume={19},
   date={1980},
   pages={401--414 (English translation)}
}

\bib{G81}{article}{
   author={Goncharov, S.S.},
   title={Groups with a finite number of constructivizations},
   journal={Soviet Math. Dokl.},
   volume={23},
   date={1981},
   pages={58--61}
}

\bib{GD80}{article}{
   author={Goncharov, S.S.},
   author={Dzgoev, V.D.},
   title={Autostability of models},
   journal={Algebra and Logic},
   volume={19},
   date={1980},
   pages={45--58 (Russian), 28--37 (English translation)}
}

\bib{HM}{article}{
   author={Harizanov, Valentina},
   author={Miller, Russell},
   title={Spectra of structures and relations},
   journal={The Journal of Symbolic Logic},
   volume={72},
   date={2007},
   pages={324--348},
}

\bib{HMM}{article}{
   author={Harizanov, Valentina},
   author={Miller, Russell},
   author={Morozov, Andrei},
   title={Simple structures with complex symmetry},
   journal={Algebra and Logic},
   volume={49},
   date={2010},
   pages={68--90},
}

\bib{HTM3}{misc}{
  author={Harrison-Trainor, Matthew},
  author={Melnikov, Alexander},
  author={Miller, Russell},
  author={Montalb\'an, Antonio},
  title={Computable functors and effective interpretability},
  date={2015-06-09},
  note={Preprint, \texttt{arXiv:1506.02737v1}},
}

\bib{HKSS}{article}{
   author={Hirschfeldt, Denis R.},
   author={Khoussainov, Bakhadyr},
   author={Shore, Richard A.},
   author={Slinko, Arkadii M.},
   title={Degree spectra and computable dimensions in algebraic structures},
   journal={Annals of Pure and Applied Logic},
   volume={115},
   date={2002},
   pages={71--113},
}

\bib{HKMS}{article}{
   author={Hirschfeldt, Denis R.},
   author={Kramer, Ken},
  author={Miller, Russell},
   author={Shlapentokh, Alexandra},
   title={Categoricity properties for computable algebraic fields},
   journal={Transactions of the American Mathematical Society},
   volume={367},
   date={2015},
   pages={3955--3980},
}

\bib{JS}{article}{
   author={Jockusch, Carl G.},
   author={Soare, Robert},
   title={Degrees of orderings not isomorphic to recursive linear orderings},
   journal={Annals of Pure and Applied Logic},
   volume={52},
   date={1991},
   pages={39--64}
}

\bib{K86}{article}{
   author={Knight, Julia F.},
   title={Degrees coded in jumps of orderings},
   journal={J. Symb. Log.},
   volume={51},
   date={1986},
   pages={1034--1042},
}

\bib{KMV}{article}{
   author={Knight, Julia F.},
   author={Miller, Sara},
   author={Vanden Boom, Michael},
   title={ Turing computable embeddings},
   journal={J. Symb. Log.},
   volume={72},
   date={2007},
   number={3},
   pages={901--918},
   doi={10.2178/jsl/1191333847},
}

\bib{Lang1960}{article}{
   author={Lang, Serge},
   title={Integral points on curves},
   journal={Inst. Hautes \'Etudes Sci. Publ. Math.},
   number={6},
   date={1960},
   pages={27--43},
   issn={0073-8301},
   review={\MR{0130219 (24 \#A86)}},
}

\bib{LMMS}{article}{
   author={Lempp, Steffen},
   author={McCoy, Charles},
   author={Miller, Russell},
   author={Solomon, Reed},
   title={Computable categoricity of trees of finite height},
   journal={ Journal of Symbolic Logic},
   volume={70},
   date={2005},
   pages={151--215}
}


\bib{Levin}{book}{
  author={Levin, Oscar},
   title={Computability Theory, Reverse Mathematics and Ordered Fields},
   date={2009},
   note={Ph.D.\ thesis, University of Connecticut},
}

\bib{M09}{article}{
   author={Miller, Russell},
   title={$\bfd$-Computable categoricity for algebraic fields},
   journal={ Journal of Symbolic Logic},
   volume={74},
   date={2009},
   pages={1325--1351}
}


\bib{MS}{article}{
   author={Miller, Russell},
   author={Schoutens, Hans},
   title={Computably categorical fields via Fermat's Last Theorem},
   journal={Computability},
   volume={2},
   date={2013},
   pages={51--65}
}

\bib{M14}{article}{
   author={Montalb\'an, Antonio},
   title={Computably theoretic classifications for classes of structures},
   journal={},
   volume={},
   date={2014},
   pages={}
}

\bib{Poonen-without3}{article}{
   author={Poonen, Bjorn},
   title={Varieties without extra automorphisms. III. Hypersurfaces},
   journal={Finite Fields Appl.},
   volume={11},
   date={2005},
   number={2},
   pages={230--268},
   issn={1071-5797},
   review={\MR{2129679 (2006e:14060)}},
   doi={10.1016/j.ffa.2004.12.001},
}

\bib{PER}{book}{
   author={Pour-El, Marian B.},
   author={Richards, J. Ian},
   title={Computability in analysis and physics},
   series={Perspectives in Mathematical Logic},
   publisher={Springer-Verlag, Berlin},
   date={1989},
   pages={xii+206},
   isbn={3-540-50035-9},
   review={\MR{1005942 (90k:03062)}},
   doi={10.1007/978-3-662-21717-7},
}

\bib{Rem81}{article}{
   author={Remmel, J.B.},
   title={Recursively categorical linear orderings},
   journal={Proceedings of the American Mathematical Society},
   volume={83},
   date={1981},
   pages={387--391}
}

\bib{Ri81}{article}{
   author={Richter, Linda Jean},
   title={Degrees of structures},
   journal={Journal of Symbolic Logic},
   volume={46},
   date={1981},
   pages={723--731}
}

\bib{Samuel1966}{article}{
    author={Samuel, Pierre},
     title={Compl\'ements \`a un article de Hans Grauert sur la conjecture
            de Mordell},
  language={French},
   journal={Inst. Hautes \'Etudes Sci. Publ. Math.},
    number={29},
      date={1966},
     pages={55\ndash 62},
      issn={0073-8301},
    review={\MR{0204430 (34 \#4272)}},
}

\end{biblist}
\end{bibdiv}

\end{document}